\documentclass[11pt]{amsart}
\usepackage[a4paper,margin=1in]{geometry}
\usepackage[T1]{fontenc}
\usepackage{lmodern}
\usepackage{microtype}
\usepackage{amsmath,amssymb,amsthm,mathtools,mathrsfs}
\usepackage{tikz}
\usepackage{graphicx}
\usepackage{float}
\usepackage{array}
\usetikzlibrary{positioning,calc,arrows.meta}
\usepackage[dvipsnames]{xcolor}
\usepackage{hyperref}
\hypersetup{
  hidelinks,
  pdfauthor={Qi Guo; Xueping Huang; Yi C. Huang; Fanghua Lin; Juncheng Wei},
  pdftitle={Br\'ezis-Coron Uniqueness of Harmonic Maps via Boundary Rigidity of Hopf Differentials},
  pdfsubject={Harmonic maps}
}
\numberwithin{equation}{section}
\newtheorem{theorem}{Theorem}[section]
\newtheorem{proposition}[theorem]{Proposition}
\newtheorem{lemma}[theorem]{Lemma}

\newtheorem{corollary}[theorem]{Corollary}
\theoremstyle{remark}
\newtheorem*{remark}{Remark}

\newcommand{\B}{B_1}
\newcommand{\R}{\mathbb{R}}
\newcommand{\C}{\mathbb{C}}
\newcommand{\Sph}{\mathbb{S}^2}

\DeclareMathOperator{\Area}{Area}
\newcommand{\AlgArea}{\operatorname{Area}_{\mathrm{alg}}}
\DeclareMathOperator{\Length}{Length}

\newcommand{\RR}{\mathbb R}
\newcommand{\eiii}{e_3}
\newcommand{\Rot}{\mathcal R}

\title[Br\'ezis-Coron Uniqueness of Harmonic Maps]{On Br\'ezis' Open Problem 3.1 }

\author[Q. Guo]{Qi Guo}
\address{School of Mathematics, Renmin University of China, Beijing, 100872, P.R. China}
\email{qguo@ruc.edu.cn}

\author[X. Huang]{Xueping Huang}
\address{School of Mathematics and Statistics, Nanjing University of Information Science and Technology, Nanjing 210044, P.R. China}
\email{hxp@nuist.edu.cn}

\author[Y. C. Huang]{Yi C. Huang}
\address{School of Mathematical Sciences, Nanjing Normal University, Nanjing 210023, P.R. China}
\email{Yi.Huang.Analysis@gmail.com}

\author[F. Lin]{Fanghua Lin}
\address{Courant Institute of Mathematical Sciences, New York University, New York, NY 10012, USA}
\email{linf@math.nyu.edu}

\author[J. Wei]{Juncheng Wei}
\address{Department of Mathematics, Chinese University of Hong Kong, Shatin, N.T., Hong Kong}
\email{wei@math.cuhk.edu.hk}

\thanks{The research of J. Wei is partially supported by GRF grant of HK RGC entitled "On critical and supercritical Fujita equations". X.
Huang was supported by the National Natural Science Foundation of China (Grant No. 12371206). We thank Dr. Yifu Zhou and Dr. Liming Sun for many
useful discussions.}

\subjclass[2020]{58E20, 35J61, 53C43}
\keywords{Harmonic maps, Hopf differential, boundary rigidity, Dirichlet problem, uniqueness}
\begin{document}

\begin{abstract}
Let $B_1$ be the unit disk in ${\mathbb R}^2$. We consider the harmonic map equation
\[
 -\Delta u=|\nabla u|^2u,
 \]
subject to the Dirichlet boundary condition $
 u(e^{i\theta})=(R\cos\theta,R\sin\theta,\sqrt{1-R^2}):=g_R$, where $0<R<1$ and $u\colon B_1\to\mathbb S^2$ is understood in the weak
 harmonic-map sense. In 1983, Br\'ezis and Coron (\cite{BrCo}) proved the existence of two explicit solutions of this nonlinear Dirichlet
 problem and showed that they are the unique minimizers in their respective  relative homotopy classes.
 In this paper, we resolve a long-standing open question originally posed in their work, later posed as {\bf Open Problem 3.1} in Brézis’
 {\em Favorite Open Problems List} (\cite{Brezis2023}). Specifically, we prove that these two explicit maps are the {\bf only} weak harmonic maps with boundary
 trace $g_{R}$, thereby providing a definitive affirmative answer to Brézis' open problem.

The proof is based on a boundary rigidity argument. An auxiliary potential $X$ associated with $u$, the Pohozaev identity for the Hopf
differential, and the planar isoperimetric inequality imply
\[
|u_r|\equiv R,
\qquad
u_r\cdot u_\theta\equiv0
\qquad\text{on }\partial B_1.
\]
Thus the Hopf differential vanishes on the boundary and hence, by holomorphicity, on the whole disk. The problem is then reduced to the
conformal case, where a stereographic-coordinate classification gives exactly the two Br\'ezis--Coron maps.
\end{abstract}

\maketitle
\tableofcontents

\section{Introduction}

Let $B_1$ be the unit disk in $ {\mathbb R}^2$.
In this paper, we study weak harmonic maps $u\colon B_1 \to\Sph$ with trace $g_R$, equivalently weak solutions of
\[
 -\Delta u=|\nabla u|^2u
 \quad\text{in }B_1,
 \qquad
 u|_{\partial B_1}=g_R,
\]
where $g_R:=g_R(e^{i\theta})= \bigl(R\cos\theta,R\sin\theta,\sqrt{1-R^2}\bigr),$ $\theta\in[0,2\pi]$, $R\in (0,1)$.
The boundary curve is a latitude circle traversed once with constant speed. Br\'ezis and Coron exhibited two explicit solutions and proved
that they are the energy minimizers in the two relevant relative homotopy classes of degree $\pm 1$ \cite{BrCo}. A long-standing open question
is whether the same boundary trace admits any further, necessarily non-minimizing, critical point; Br\'ezis later posed this as {\it Open
Problem~3.1} in his {\em "Favorite Open Problems"} list \cite{Brezis2023}.

problem in which one has to decide whether the two natural conformal and anti-conformal fillings exhaust all critical points.

To simplify the notations, we set
\[
 \rho=\frac{R}{1+\sqrt{1-R^2}}\in(0,1).
\]
The two Br\'ezis--Coron solutions are
\[
 u^+(r,\theta)=
 \left(
 \frac{2\rho r}{1+\rho^2r^2}\cos\theta,
 \frac{2\rho r}{1+\rho^2r^2}\sin\theta,
 \frac{1-\rho^2r^2}{1+\rho^2r^2}
 \right),
\]
and
\[
 u^-(r,\theta)=
 \left(
 \frac{2\rho r}{\rho^2+r^2}\cos\theta,
 \frac{2\rho r}{\rho^2+r^2}\sin\theta,
 \frac{r^2-\rho^2}{\rho^2+r^2}
 \right).
\]
Let $S=(0,0,-1)$ be the south pole. We use the south-pole stereographic projection
\[
 \pi_S(a,b,c)=\frac{a+ib}{1+c},
 \qquad
 \pi_S^{-1}(w)=\left(
 \frac{2\Re w}{1+|w|^2},
 \frac{2\Im w}{1+|w|^2},
 \frac{1-|w|^2}{1+|w|^2}
 \right).
\]
In this coordinate, the two solutions correspond to
\[
 w^+(z)=\rho z,
 \qquad
 w^-(z)=\frac{\rho}{\bar z}.
\]
A direct verification of these formulas and of the associated potentials is recorded in Appendix~\ref{app:explicit}. Figure~\ref{fig:problem}
sketches the geometry of the boundary trace and the two explicit fillings.

\begin{figure}[H]
\centering
\begin{tikzpicture}[>=Stealth, line cap=round, line join=round, font=\footnotesize]
  \begin{scope}[shift={(-4.40,0.05)}]
    \fill[gray!6] (0,0) circle (1.25);
    \draw[red!70!black, thick] (0,0) circle (1.25);
    \draw[red!70!black, very thick, ->] (1.25,0) arc[start angle=0,end angle=65,radius=1.25];
    \node at (0,0) {$B_1$};
    \node[above] at (0,1.55) {prescribed trace on $\partial B_1$};
    \node[red!70!black, anchor=west, fill=white, inner sep=0.8pt] at (1.30,0.78) {$e^{i\theta}$};
  \end{scope}

  \draw[->, very thick] (-2.75,0.05) -- (-1.25,0.05);
  \node[above] at (-2.00,0.05) {$u$};

  \begin{scope}[shift={(1.30,0.05)}]
    \def\rr{1.62}
    \def\hh{0.64}
    \pgfmathsetmacro{\aa}{sqrt(\rr*\rr-\hh*\hh)}
    \begin{scope}
      \clip (0,0) circle (\rr);
      \fill[blue!9] (-2.0,\hh) rectangle (2.0,2.0);
      \fill[green!10] (-2.0,-2.0) rectangle (2.0,\hh);
    \end{scope}
    \draw[thick] (0,0) circle (\rr);
    \draw[red!75!black, very thick] (-\aa,\hh) -- (\aa,\hh);
    \fill (0,\rr) circle (1.1pt) node[above=2pt] {$N$};
    \fill (0,-\rr) circle (1.1pt) node[below=2pt] {$S$};
    \node[blue!70!black, anchor=west] (ulabel) at (2.00,1.04) {$u^+(B_1)$};
    \draw[->, blue!70!black] (ulabel.west) -- (0.67,1.05);
    \node[green!55!black, anchor=west] (llabel) at (2.00,-0.86) {$u^-(B_1)$};
    \draw[->, green!55!black] (llabel.west) -- (0.57,-0.86);
    \node[red!75!black, anchor=west] (blabel) at (2.00,0.31) {$g_R(\partial B_1)$};
    \draw[->, red!75!black] (blabel.west) -- (1.14,\hh);
  \end{scope}
\end{tikzpicture}
\caption{The Br\'ezis-Coron Dirichlet problem. Left: the unit disk with prescribed trace on $\partial B_1$. Right: a meridional cross-section
of the target sphere; the boundary data trace a latitude circle, and the two explicit solutions fill the corresponding upper and lower
spherical caps.}
\label{fig:problem}
\end{figure}
Our main result is the following uniqueness theorem, which answers {\it Open Problem~3.1} in \cite{Brezis2023}.

\begin{theorem}\label{thm:main}
Let $0<R<1$ and let $u\in H^1(B_1;\Sph)$ be a weak harmonic map with trace $g_R$. Then $u=u^+$ or $u=u^-$.
\end{theorem}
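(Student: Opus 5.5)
We follow the strategy announced in the abstract: show that the Hopf differential of $u$ vanishes identically, and then classify conformal/anticonformal harmonic maps with trace $g_R$.

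\emph{Step 1: regularity and the Hopf differential.} By H\'elein's theorem, a weak harmonic map $u\in H^1(B_1;\Sph)$ is smooth in the interior. Since $g_R$ is smooth, boundary regularity (Qing, or Schoen--Uhlenbeck type arguments adapted to weak harmonic maps in dimension two) gives $u\in C^\infty(\overline{B_1})$. The Hopf differential
\[
\Phi(z)=4 z^2\,\langle u_z,u_z\rangle=\bigl(r^2|u_r|^2-|u_\theta|^2\bigr)-2i\,r\,u_r\cdot u_\theta
\]
is then holomorphic in $B_1$ and continuous up to $\partial B_1$, where $|u_\theta|=R$. It therefore suffices to show that
\[
|u_r|\equiv R\quad\text{and}\quad u_r\cdot u_\theta\equiv 0\qquad\text{on }\partial B_1 .
\]
Once this holds, $\Phi\equiv 0$ on $\partial B_1$, so $\Phi\equiv0$ in $B_1$ by the maximum principle, and $u$ is weakly conformal.

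\emph{Step 2: boundary rigidity; this is the main obstacle.} We gather integral information on the boundary.

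First, the Pohozaev identity, obtained by testing the equation with $x\cdot\nabla u$, gives
\[
\int_{\partial B_1}\bigl(|u_r|^2-|u_\theta|^2\bigr)\,d\theta=0,
\qquad\text{that is,}\qquad
\int_{\partial B_1}|u_r|^2=2\pi R^2 .
\]
Equivalently, $\Re\Phi$ has zero mean on the circle. Also $\int\Im\Phi\,d\theta=\Im\Phi(0)\cdot 2\pi$ forces control of the mean of $u_r\cdot u_\theta$.

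Second, we use the rotational conservation law. The equation is equivalent to $\operatorname{div}(u\times\nabla u)=0$, and the $e_3$-component $(u\times\nabla u)\cdot e_3$ is divergence free. Hence there is a potential $X$ with $\nabla^\perp X=(u\times \nabla u)\cdot e_3$. On the boundary, $\partial_\theta X$ is given by $(u\times u_r)\cdot e_3$, and the total flux vanishes.

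Third, we write $u_r$ on $\partial B_1$ in the frame $\{g_\theta/R,\ \nu\}$ of $T_{g}\Sph$, where $\nu$ is the unit conormal to the latitude circle. Thus
\[
u_r=a\,\frac{g_\theta}{R}+b\,\nu .
\]
The $e_3$-flux identity expresses $\int b$ in terms of an area quantity. The key point is to identify $\int b\,d\theta$ with (a multiple of) the enclosed algebraic area of a closed planar curve naturally built from $X$ and $u$ on the boundary, whose length is $\int\sqrt{a^2+b^2}\le\bigl(2\pi\int|u_r|^2\bigr)^{1/2}=2\pi R$. The isoperimetric inequality then gives $|\int b|\le 2\pi R$. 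On the other hand, an independent identity (degree/area of $u(B_1)$ on the sphere via the Gauss--Bonnet-type formula $\int\kappa_g=2\pi\sqrt{1-R^2}/R\cdot$ length) should force equality. Equality in Cauchy--Schwarz and in the isoperimetric inequality then yields $a\equiv0$ and $|b|\equiv R$, which are exactly the two boundary identities of Step 1. I would first try to find this curve by integrating $u_r$ against the frame and comparing with the Pohozaev constant; making the equality case close up is where I expect the real work to lie.

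\emph{Step 3: the conformal classification.} With $\Phi\equiv0$, $u$ is a (branched) conformal or anticonformal harmonic map, i.e.\ in stereographic coordinates $w=\pi_S\circ u$ is meromorphic or antimeromorphic. On $\partial B_1$ it satisfies $|w|=\rho$ together with $w(e^{i\theta})=\rho e^{i\theta}$. In the meromorphic case, the function $w(z)/(\rho z)$ equals $1$ on the circle, so by the identity theorem (reflection) $w=\rho z$. In the antimeromorphic case, the same argument applied to $\overline{w}$ compared with $\rho/z$ gives $w=\rho/\bar z$. Hence $u=u^\pm$.
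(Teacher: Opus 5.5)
\textbf{There is a genuine gap in Step 2.} Your Steps 1 and 3 are sound. Step 3 is a global version of the paper's argument: the paper instead uses $w_\theta\neq0$ near $\partial B_1$ to fix the sign of the Jacobian on a boundary collar, reads off Laurent coefficients there, and extends by real analyticity. But Step 2 is the whole content of the theorem, and the route you sketch does not reach it.

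First, the $e_3$-component of $u\times\nabla u$ only sees the tangential part of $u_r$. On $\partial B_1$ one has $(u\times u_r)\cdot e_3=u_r\cdot(e_3\times g_R)=u_r\cdot u_\theta=aR$. So that flux identity gives $\int a\,d\theta=0$, which you already had from $\Im\Phi$ and $\Phi(0)=0$, and it says nothing about $\int b\,d\theta$.

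Second, your inequalities run the wrong way. The bound $|\int b\,d\theta|\le\int|u_r|\,d\theta\le2\pi R$ is just Cauchy--Schwarz plus Pohozaev. What you would actually need is the reverse bound $|\int b\,d\theta|\ge2\pi R$. That bound alone would already finish the proof, via $2\pi R\le\int|b|\,d\theta\le\int|u_r|\,d\theta\le2\pi R$, so your ``independent identity'' is equivalent to the whole problem. It cannot come from a degree or Gauss--Bonnet computation: up to sign, $R\int b\,d\theta=\int_{\partial B_1}\partial_r u_3\,d\theta=-\int_{B_1}|\nabla u|^2u_3\,dx$, which is an energy-weighted integral, not a topological quantity.

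\textbf{The missing idea.} Use all three components of the conservation law, namely the vector potential $X$ with $X_x=-u\times u_y$ and $X_y=u\times u_x$, and project its boundary curve $\Gamma=X|_{\partial B_1}$ horizontally.
\begin{enumerate}
\item One has $|\Gamma_\theta|=|u_r|$ and $\partial_\theta X_3=u_r\cdot u_\theta$.
\item The vector identity $(\xi\times\eta)\times(\xi\times\zeta)=\det(\xi,\eta,\zeta)\,\xi$, together with $u_x\times u_y\parallel u$, gives the pointwise identity $\det\nabla(P\circ X)=\det\nabla(P\circ u)$. Hence $\gamma=P\circ\Gamma$ has algebraic area exactly $\pi R^2$, fixed by the trace alone.
\item The isoperimetric inequality then gives a \emph{lower} bound on length from this exactly known area: $\Length(\gamma)\ge2\pi R$.
\item This is matched by the upper bound $\Length(\gamma)\le\Length(\Gamma)\le2\pi R$ from projection and Pohozaev.
\end{enumerate}
Equality throughout forces $|u_r|\equiv R$ (equality in Cauchy--Schwarz) and $u_r\cdot u_\theta\equiv0$ (no vertical speed of $\Gamma$). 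These are exactly the boundary identities your Step 1 needs.
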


\begin{remark}
\begin{enumerate}
\item[(1)] Theorem~\ref{thm:main} classifies all stationary points with trace \(g_R\), not only energy minimizers. Br\'ezis and Coron
    identified two relative minimizers; the theorem rules out any non-minimizing  critical points.
\item[(2)] This is not a formal consequence of the relative-degree theory. In two dimensions, the relative degree may be lost under weak
    \(H^1\) convergence through bubbling, so a variational classification of minimizers need not control all critical points.
\item[(3)] The proof identifies a special rigidity of the once-around latitude trace. The auxiliary potential, the Pohozaev identity, and
    the sharp planar isoperimetric inequality force the Hopf differential to vanish.
\item[(4)] This contrasts with higher-winding or more general nonconvex boundary data, where the work of Br\'ezis--Coron, Soyeur, Qing,
    Kuwert, and Pierre shows that multiplicity is a natural phenomenon
    \cite{BrCo,Soyeur1989,Qing1992Remark,Qing1992Multiple,Kuwert1994,Pierre2008}.
\item[(5)] For the two endpoints of the parameter interval: $R=1$ is nondegenerate because $w_\theta(e^{i\theta})=ie^{i\theta}$, so the same
    Hopf-rigidity and collar-classification proof applies; $R=0$ collapses the boundary circle to a point, and destroys the nonvanishing
    boundary derivative used to select a holomorphic or anti-holomorphic collar.
\end{enumerate}
\end{remark}

Next, we present an application of the above uniqueness theorem for harmonic maps to the
study of \(H\)-systems. Two-dimensional \(H\)-systems belong to the broader class of
conformally invariant elliptic systems, whose conservation-law structure and regularity theory
were developed by H\'elein and, in a general form, by Rivi\`ere
\cite{Helein,Riviere2007}.
Let $\eiii=(0,0,1)$.
For $\sigma\in\{+1,-1\}$ define the latitude normal trace
\[
g_R^\sigma(\theta)
=
\bigl(R\cos\theta,\,R\sin\theta,\,\sigma\sqrt{1-R^2}\bigr),
\]
and the boundary circle
\[
\Gamma_R(\theta)=(2R\cos\theta,2R\sin\theta,0).
\]
For $\phi\in\RR$, we write
\[
\Rot_\phi(r,\theta)=(r,\theta+\phi) .
\]
The two normalized model disks with boundary $\Gamma_R(\theta)$ are
\begin{align*}
Z^+(r,\theta)
&=2u^+(r,\theta)-2\sqrt{1-R^2}e_3,\\[0.4em]
Z^-(r,\theta)
&=
-2u^-(r,\theta+\pi)+2\sqrt{1-R^2}e_3.
\end{align*}
For a conformal immersion $Y:\B\to\RR^3$, we denote its induced unit normal by
\[
\nu_Y=\frac{Y_x\times Y_y}{|Y_x\times Y_y|}.
\]
Here and below the sign of $\nu_Y$ is fixed by the ordered coordinates $(x,y)$.

\begin{corollary}\label{cor:normal-trace-Hsystem}
Let $Y\in C^2(\B;\RR^3)\cap C^1(\overline{\B};\RR^3)$ be a conformal immersion satisfying
\begin{align*}\displaystyle
\begin{cases}
        -\Delta Y=Y_x\wedge Y_y,\quad\text{in }\B.\\
        \nu_Y(1,\theta)=g_R^\sigma(\theta+\phi),
\end{cases}
\end{align*}
for some $\sigma\in\{+1,-1\}$ and some $\phi\in\RR$.
Then there exists $a\in\RR^3$ such that
\[
Y=a+Z^\sigma\circ\Rot_\phi.
\]
\end{corollary}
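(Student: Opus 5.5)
The plan is to reduce the corollary to Theorem~\ref{thm:main} through the Gauss map. Write $\lambda=|Y_x|^2=|Y_y|^2$ for the conformal factor. Since $Y$ is a conformal immersion, $Y_x\times Y_y=\lambda\,\nu_Y$ and $\Delta Y=2H\lambda\,\nu_Y$. Hence the equation $-\Delta Y=Y_x\wedge Y_y$ says exactly that $Y$ has constant mean curvature, with $|H|=\tfrac12$ (the sign depends on the orientation convention). By the Ruh--Vilms theorem, the Gauss map of a constant mean curvature surface is harmonic. So $\nu:=\nu_Y$ is a harmonic map $\B\to\Sph$. Because $Y\in C^2(\B)\cap C^1(\overline{\B})$, $\nu$ is continuous up to the boundary, has the prescribed trace, and lies in $H^1$. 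The finiteness of $\int|\nabla\nu|^2$ should follow from $|\nabla\nu|^2=|A|^2\lambda$; if this is not immediate, I would verify it via Gauss--Bonnet and the boundary regularity of $Y$.

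Next I normalize the trace. Precomposing with $\Rot_{-\phi}$ removes $\phi$, since harmonicity and the equation are rotation invariant. For $\sigma=-1$, I compose with the reflection $(a,b,c)\mapsto(a,b,-c)$, which preserves harmonic maps into $\Sph$ and turns $g_R^{-1}$ into $g_R$. Theorem~\ref{thm:main} then gives $\nu=u^+$ or $\nu=u^-$ up to these normalizations. In either case $\nu$ is conformal or anticonformal, so its Hopf differential $\langle\nu_z,\nu_z\rangle$ vanishes identically.

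Now I transfer this to $Y$. By the Weingarten equation $d\nu=-S\,dY$, one has $\langle\nu_z,\nu_z\rangle=-2H\langle Y_{zz},\nu\rangle$ for CMC surfaces; I would recheck the constant. Since $H\neq0$, the Hopf differential $\langle Y_{zz},\nu\rangle$ of $Y$ vanishes, so $Y$ is totally umbilic. Hence $\nu_x=-HY_x$ and $\nu_y=-HY_y$, which gives $Y=a-H^{-1}\nu$ with $|H^{-1}|=2$. In other words, $Y$ lies on a sphere of radius $2$ and equals $a\pm2\nu$.

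Finally I match this with the model disks. The sign of $-H^{-1}$ is fixed by the equation together with the orientation of $\nu_Y$. I would compute $\nu_{Z^\pm}$ directly to check that $Z^+=2u^+-\text{const}$ has normal $u^+$. For $Z^-$, the combination of the minus sign and the shift $\theta\mapsto\theta+\pi$ must reproduce the trace $g_R^{-1}$ and the correct orientation. The case split is then as follows: the admissible choice among $u^\pm$ is forced by $\sigma$, and the one incompatible with the orientation forced by $-\Delta Y=Y_x\wedge Y_y$ is excluded. Absorbing constants into $a$ gives $Y=a+Z^\sigma\circ\Rot_\phi$.

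I expect the main obstacle to be the sign and orientation bookkeeping. That means showing that for each $\sigma$ exactly one of $u^\pm$ is compatible, and that this one yields $Z^\sigma$ and not a reflected copy. I would settle it by computing $Y_x\times Y_y$ and $-\Delta Y$ explicitly for $Y=a+c\,\nu$ with $\nu\in\{u^\pm\}$ and $c=\pm2$.
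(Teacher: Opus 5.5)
Your proposal is correct and follows essentially the same route as the paper. The Gauss map is harmonic because the mean curvature is constant, Theorem~\ref{thm:main} applies after normalizing the trace, and the Weingarten relation forces umbilicity, hence $Y=a+2\nu_Y$, with the orientation-reversing branch excluded. The differences are only in presentation. First, you normalize $\sigma=-1$ by the reflection $(x_1,x_2,x_3)\mapsto(x_1,x_2,-x_3)$. Applied to $u^-$ this gives exactly $-u^-\circ\Rot_\pi$, so it matches the paper's antipodal-plus-rotation normalization. Second, you get umbilicity directly from $\langle\nu_z,\nu_z\rangle=2H\langle Y_{zz},\nu\rangle$ (with $\Delta Y=2e^{2\lambda}H\nu_Y$ the sign is $+2H$, though only vanishing matters) and settle orientation afterwards. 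The paper instead first excludes the anticonformal branch via the principal-curvature argument of Lemma~\ref{reverseimpossible} and then reads off $A=-\tfrac12 I$.
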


\begin{remark}
In the above corollary, it is sufficient to assume that
\(Y\in W^{1,2}(B_1;\mathbb R^3)\) is a weakly conformal \(H\)-system disk.
By the regularity theory for two-dimensional \(H\)-systems and, more generally,
for conformally invariant elliptic systems in the plane
\cite{Heinz1975,Gruter1981,Riviere2007}, any such weak solution is smooth in
the interior of \(B_1\). Thus the uniqueness result applies equally to weakly
conformal \(W^{1,2}\)-solutions, provided the induced unit normal has the
prescribed trace.
\end{remark}

The proof of Theorem \ref{thm:main} consists of two main steps as follows.

\smallskip
\noindent
\textit{Step 1: boundary rigidity.}
To a harmonic map $u$ we associate the auxiliary potential
\[
X_x=-u\times u_y,
\qquad
X_y=u\times u_x.
\]
This is the classical first-order potential from the Gauss map / CMC correspondence. The horizontal projection of the boundary curve of $X$
has algebraic area exactly $\pi R^2$. On the other hand, the Hopf differential and the Pohozaev identity give an upper bound on its length.
The planar isoperimetric inequality saturates and forces the pointwise boundary identities
\[
|u_r|\equiv R,
\qquad
u_r\cdot u_\theta\equiv0
\qquad\text{on }\partial B_1.
\]
This kills the boundary values of the Hopf differential.

\smallskip
\noindent
\textit{Step 2: conformal classification.}
Once the Hopf differential vanishes identically, the map is weakly conformal. In a stereographic chart near the boundary, the corresponding
complex-valued map is then Euclidean-conformal. Because the boundary derivative never vanishes, the map is either holomorphic or
anti-holomorphic on a whole collar of the boundary. A Laurent-series argument and the boundary condition give exactly $w=\rho z$ or
$w=\rho/\bar z$. Interior analyticity then extends the identification to all of $B_1$.

\begin{figure}[t]
\centering
\begin{minipage}{0.46\textwidth}
\centering
\begin{tikzpicture}[>=Stealth, line cap=round, line join=round, scale=0.84, font=\scriptsize]
  \coordinate (A) at (-1.90,-1.22);
  \coordinate (B) at (1.22,-1.22);
  \coordinate (C) at (2.05,-0.36);
  \coordinate (D) at (-1.08,-0.36);
  \fill[gray!10] (A)--(B)--(C)--(D)--cycle;
  \draw[gray!70, thick] (A)--(B)--(C)--(D)--cycle;
  \node[gray!70, anchor=west] at (2.05,-0.28) {$x_3=0$};
  \draw[red!75!black, very thick]
    plot[smooth cycle, tension=0.80] coordinates {(-1.08,-0.85) (-0.35,-0.55) (0.56,-0.72) (1.22,-0.92) (1.01,-0.34) (0.18,-0.10)
    (-0.58,-0.23)};
  \draw[blue!70!black, very thick]
    plot[smooth cycle, tension=0.80] coordinates {(-1.25,0.62) (-0.47,1.45) (0.56,1.58) (1.28,0.98) (1.35,0.17) (0.75,-0.08) (-0.12,0.21)
    (-0.96,0.08)};
  \draw[densely dashed, gray!75] (-1.08,-0.85) -- (-1.25,0.62);
  \draw[densely dashed, gray!75] (-0.35,-0.55) -- (-0.47,1.45);
  \draw[densely dashed, gray!75] (0.56,-0.72) -- (0.56,1.58);
  \draw[densely dashed, gray!75] (1.22,-0.92) -- (1.28,0.98);
  \draw[densely dashed, gray!75] (1.01,-0.34) -- (1.35,0.17);
  \draw[->, gray!80, thick] (2.52,-1.05) -- (2.52,2.06) node[above] {$x_3$};
  \draw[->, dashed, thick, gray!85] (0.06,0.98) -- (0.26,-0.23);
  \node[fill=white, inner sep=0.8pt] at (-0.24,0.83) {$P$};
  \node[blue!70!black, anchor=west, fill=white, inner sep=0.8pt] (Glabel) at (-1.75,1.74) {$\Gamma=X|_{\partial B_1}$};
  \draw[->, blue!70!black] (Glabel.south east) -- (-0.58,1.33);
  \node[red!75!black, anchor=west, fill=white, inner sep=0.8pt] (glabel) at (-1.78,-1.66) {$\gamma=P\circ\Gamma$};
  \draw[->, red!75!black] (glabel.north east) -- (-0.72,-0.78);
\end{tikzpicture}

\smallskip
{\scriptsize The boundary curve $\Gamma$ and its orthogonal projection $\gamma=P\circ\Gamma$.}
\end{minipage}\hfill
\begin{minipage}{0.52\textwidth}
\centering
\begin{tikzpicture}[>=Stealth,font=\scriptsize,box/.style={draw, rounded corners=3pt, align=center, inner sep=4pt,
fill=gray!7},arr/.style={->, thick}]
  \node[box, text width=2.9cm] (A) at (0.00,2.05) {$\Area(\gamma)=\pi R^2$\\projection-Jacobian};
  \node[box, text width=3.20cm] (B) at (4.00,2.05) {$L_h\le L\le2\pi R$\\projection $+$ Pohozaev};
  \node[box, text width=4.0cm] (C) at (2.00,0.68) {$4\pi^2R^2\le L_h^2\le L^2\le4\pi^2R^2$\\isoperimetric saturation};
  \node[box, text width=4.5cm] (D) at (2.00,-0.96) {$|u_r|\equiv R,\quad u_r\cdot u_\theta\equiv0$\\on $\partial B_1$};
  \node[box, text width=3.8cm] (E) at (2.00,-2.42) {$q|_{\partial B_1}=0$\\therefore $q\equiv0$};
  \node[box, text width=4.1cm] (F) at (2.00,-3.88) {conformal classification\\$u\in\{u^+,u^-\}$};
  \draw[arr] (A) -- (C);
  \draw[arr] (B) -- (C);
  \draw[arr] (C) -- (D);
  \draw[arr] (D) -- (E);
  \draw[arr] (E) -- (F);
\end{tikzpicture}

\smallskip
{\scriptsize The rigidity mechanism.}
\end{minipage}
\caption{The geometric core of the proof. The left panel shows the boundary curve $\Gamma=X|_{\partial B_1}\subset\R^3$ together with its
orthogonal projection $\gamma=P\circ\Gamma$ onto the horizontal plane $x_3=0$. The dashed segments indicate projection fibers. The right panel
summarizes the exact chain of identities and inequalities that forces the boundary vanishing of the Hopf differential; here
$L=\Length(\Gamma)$ and $L_h=\Length(\gamma)$.}
\label{fig:proof}
\end{figure}

The first step is the main new ingredient: it converts the exact algebraic area of the projected potential curve into boundary vanishing of
the Hopf differential through sharp isoperimetric saturation. It isolates a rigid geometric feature of the latitude-circle trace that is
invisible to the variational classification of the minimizing solutions. Figure~\ref{fig:proof} summarizes the geometric core of the rigidity
argument.

The paper is organized as follows. In Section~\ref{sec:prelim} we recall the variational formulation, the weak harmonic-map setting, the Hopf
differential, and the Pohozaev identity used later. In Section~\ref{sec:auxiliary} we introduce the auxiliary potential associated with a
solution and prove the projection-Jacobian identity. Section~\ref{sec:rigidity} contains the main boundary rigidity argument. In
Section~\ref{sec:conformal} we complete the conformal classification on a boundary collar by using the south-pole stereographic coordinate.
Section~\ref{sec:proof-main} combines these ingredients to prove Theorem~\ref{thm:main}, discusses the endpoint cases, and derives the
\(H\)-system consequence. Section~\ref{sec:further-remarks} records several further remarks and possible extensions. Finally,
Appendix~\ref{app:explicit} verifies the explicit Brézis--Coron solutions and their associated potentials, while Appendix~\ref{app:vectors}
collects the elementary vector identities used in the projection-Jacobian computation.

\section{Background and preliminaries}\label{sec:prelim}

\subsection{Variational origin, notation, and weak solutions}\label{subsec:variational-notation}

Let \(g\in H^{1/2}(\partial B_1;\Sph)\). The variational problem underlying the harmonic-map equation is
\begin{equation*}
\inf\left\{ E(u):=\frac12\int_{B_1}|\nabla u|^2\,dxdy: u\in\mathcal A_g\right\},
\end{equation*}
where $\mathcal A_g=\bigl\{u\in H^1(B_1;\Sph):\operatorname{Tr}_{\partial B_1}u=g\bigr\}$. The term ``harmonic map'' refers to the
Euler--Lagrange equation for this Dirichlet energy. If the target were Euclidean, the same variational principle would give \(\Delta u=0\),
the usual harmonic-function equation. For a Riemannian target, the equation is the vanishing of the tension field, i.e. the trace of the
covariant second derivative of the map \cite{EellsSampson1964,Helein}.

For the embedded target \(\Sph\subset\R^3\), the constrained Euler--Lagrange equation has a simple extrinsic form. Variations are taken with
fixed trace and under the pointwise constraint \(|u|=1\). Introducing a scalar Lagrange multiplier gives
\[
-\Delta u=\lambda u.
\]
Since \(|u|^2=1\), differentiating twice yields \(u\cdot\Delta u=-|\nabla u|^2\). Taking the dot product with \(u\) gives \(\lambda=|\nabla
u|^2\). Hence critical points with fixed trace \(g\) solve
\begin{equation}\label{eq:HM}
-\Delta u=|\nabla u|^2u
\quad\text{in }B_1,
\qquad
 u|_{\partial B_1}=g.
\end{equation}
For \(g=g_R\), this is the boundary value problem studied in the paper. Theorem~\ref{thm:main} classifies all critical points of this
constrained problem, not only its minimizers.

\paragraph{Notation.}
The following conventions are used throughout the paper.
\begin{center}
\small
\renewcommand{\arraystretch}{1.20}
\begin{tabular}{p{0.39\textwidth}p{0.53\textwidth}}
\hline
\textbf{Symbol} & \textbf{Meaning} \\
\hline
\(B_1=B_1(0)\) & unit disk in \(\R^2\) \\
\(z=x+iy=re^{i\theta}\) & Cartesian and polar coordinates \\
\(\partial_z=\frac12(\partial_x-i\partial_y)\) & complex \(z\)-derivative \\
\(\partial_{\bar z}=\frac12(\partial_x+i\partial_y)\) & complex \(\bar z\)-derivative \\
\(u=(u_1,u_2,u_3)\), \(X=(X_1,X_2,X_3)\) & component notation in \(\R^3\) \\
\(P(x_1,x_2,x_3)=(x_1,x_2)\) & horizontal projection onto \(\R^2\) \\
\(\Gamma(\theta)=X(e^{i\theta})\) & boundary curve of the potential \(X\) \\
\(\gamma=P\circ\Gamma\) & horizontal projection of \(\Gamma\) \\
\(L=\Length(\Gamma)\), \(L_h=\Length(\gamma)\) & lengths of \(\Gamma\) and \(\gamma\) \\
\(q(z)=u_z\cdot u_z\) & Hopf scalar; \(q(z)\,dz^2\) is the Hopf differential \\
\(\rho=R/(1+\sqrt{1-R^2})\) & stereographic radius of the boundary circle \\
\hline
\end{tabular}
\end{center}
Lower numerical subscripts denote components, while lower coordinate subscripts denote partial derivatives. Thus \(u_3\) is the third
component of \(u\), whereas \(u_x=\partial_xu\), \(u_r=\partial_ru\), and \(u_\theta=\partial_\theta u\). Similarly, \(\sigma_{2,\theta}\)
means \(\partial_\theta(\sigma_2)\). For a scalar function \(f\), \(\nabla f\) is its gradient. For a vector-valued map \(F\), \(DF\) denotes
its real differential, and in PDE notation we sometimes write \(\nabla F\) for the same Jacobian matrix. Hence
\[
\det\nabla(P\circ X)=\det D(P\circ X),
\qquad
\det\nabla(P\circ u)=\det D(P\circ u).
\]
All real dot and cross products are Euclidean. In the definition of the Hopf differential, the dot product is extended complex bilinearly, not
sesquilinearly.

We call $u\in H^1(B_1;\Sph)$ a weak harmonic map with trace $g_R$ if $u|_{\partial B_1}=g_R$ in the trace sense and
\[
\int_{B_1}\langle \nabla u,\nabla\varphi\rangle\,dx=\int_{B_1}|\nabla u|^2\,u\cdot\varphi\,dx
\]
for every $\varphi\in C_c^\infty(B_1;\R^3)$. Equivalently, $u$ is a distributional solution of \eqref{eq:HM} with finite Dirichlet energy and
prescribed trace.

This weak formulation is the extrinsic form of the constrained Euler--Lagrange equation. The normal term \(|\nabla u|^2u\) encodes the sphere
constraint; equivalently, if one tests only against tangent variations \(\varphi\perp u\), the Lagrange multiplier disappears and one obtains
the intrinsic tension-field formulation.

\begin{proposition}[Regularity Reduction]\label{prop:regularity}
Every weak solution of \eqref{eq:HM} is smooth up to the boundary. In particular, all boundary quantities used below (i.e. $u_r|_{\partial
B_1}$, $u_\theta|_{\partial B_1}$, the Hopf differential on $\overline{B_1}$, and the boundary curve of the auxiliary potential) are
classical, and Theorem~\ref{thm:main} can be proved in the class $C^\infty(\overline{B_1};\Sph)$.
\end{proposition}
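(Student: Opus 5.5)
The plan is to combine Hélein's interior regularity theorem with the boundary regularity theory for weakly harmonic maps with smooth Dirichlet data. Then we bootstrap to $C^\infty(\overline{B_1})$.

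\emph{Step 1: interior regularity.} For a weak harmonic map $u\in H^1(B_1;\Sph)$ I would rewrite the equation in Shatah's conservation-law form
\[
\operatorname{div}(u\times\nabla u)=0,\qquad -\Delta u^i=\sum_j (u^i\nabla u^j-u^j\nabla u^i)\cdot\nabla u^j .
\]
The right-hand side is a sum of div-curl (Jacobian-type) expressions, since $u^i\nabla u^j-u^j\nabla u^i$ is divergence free and $\nabla u^j$ is curl free. By Coifman--Lions--Meyer--Semmes this lies in the Hardy space $\mathcal H^1$. Hence $\nabla u\in L^{2,1}$, so $u$ is continuous. Standard bootstrapping then gives $u\in C^\infty(B_1)$; this is Hélein's theorem.

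\emph{Step 2: regularity up to the boundary.} The data $g_R$ is real-analytic on $\partial B_1$. I would extend it to a smooth harmonic extension $\tilde g$ on $B_1$ and write $v=u-\tilde g\in H^1_0$. Two routes are available.

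\begin{enumerate}
\item[(a)] Reflect across $\partial B_1$ using the Qing/Scheven boundary version of Hélein's argument, or Rivière's gauge method applied to the antisymmetric potential $\Omega^{ij}=u^i\nabla u^j-u^j\nabla u^i$. This gives $u\in C^{0}(\overline{B_1})$ with a small-energy $\epsilon$-regularity estimate on half-disks.
\item[(b)] Alternatively, use Wente's inequality on half-disks with zero boundary data for $v$. The div-curl structure is preserved up to a flat boundary after conformally mapping half-disks, since the Dirichlet problem is conformally invariant in 2D.
\end{enumerate}

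Choosing radii small enough that the energy on each half-disk is below $\epsilon$ (possible by absolute continuity of the integral), we get Hölder continuity up to the boundary. Once $u\in C^{0,\alpha}(\overline{B_1})$, the right-hand side $|\nabla u|^2u$ can be handled by the standard Ladyzhenskaya--Ural'tseva/Morrey small-oscillation argument: a continuous harmonic map with smooth boundary data is $C^{1,\alpha}$ up to the boundary. Schauder estimates then bootstrap to $C^{k,\alpha}(\overline{B_1})$ for all $k$.

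\emph{Step 3: consequences.} All listed boundary quantities are smooth functions on $\overline{B_1}$:
\begin{itemize}
\item the traces $u_r|_{\partial B_1}$ and $u_\theta|_{\partial B_1}$;
\item the Hopf scalar $q=u_z\cdot u_z$;
\item the auxiliary potential $X$. Its defining one-form $(-u\times u_y)\,dx+(u\times u_x)\,dy$ is closed because $\operatorname{div}(u\times\nabla u)=0$, and it is smooth on the simply connected $\overline{B_1}$, so $X$ is smooth up to the boundary and its boundary curve is classical.
\end{itemize}
Therefore Theorem~\ref{thm:main} may be proved in $C^\infty(\overline{B_1};\Sph)$.

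The main obstacle is the boundary continuity in Step 2. Everything else is classical once $\epsilon$-regularity on half-disks is available. I would cite Qing's boundary regularity result for weakly harmonic maps from surfaces with smooth Dirichlet data rather than reprove it, checking only that $g_R\in C^\infty$ satisfies its hypotheses.
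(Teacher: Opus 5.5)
Your proposal takes the same route as the paper: H\'elein's theorem gives interior smoothness, Qing's boundary regularity theorem for weakly harmonic maps from surfaces with smooth Dirichlet data gives smoothness up to $\partial B_1$, and the listed boundary quantities are then classical (including $X$, whose defining one-form is closed and smooth up to $\partial B_1$). Your added sketches (Shatah's conservation law, CLMS/Wente, half-disk $\epsilon$-regularity and the bootstrap) are consistent with those cited results. The paper also cites Schoen--Uhlenbeck, whose boundary theory concerns energy minimizers, so your reliance on Qing is the more precise reference for arbitrary weak solutions.
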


\begin{proof}
For two-dimensional weak harmonic maps, interior smoothness follows from the regularity theory of H\'elein \cite{Helein}. Since the boundary
trace $g_R$ is smooth, boundary smoothness follows from the boundary regularity theorem of Schoen--Uhlenbeck; see also Qing's boundary
regularity theorem for weakly harmonic maps from surfaces \cite{SchoenUhlenbeckBoundary,Qing1993}. Hence every weak solution belongs to
$C^\infty(\overline{B_1};\Sph)$. The remaining assertions follow from this regularity and standard trace theory.
\end{proof}

From now on, unless explicitly stated otherwise, $u$ denotes a smooth solution of \eqref{eq:HM}.

\subsection{The Hopf differential and the Pohozaev identity}

Define the Hopf scalar
\[
q(z)=u_z\cdot u_z,
\qquad z=x+iy,
\]
where $u_z=\frac12(u_x-iu_y)$ and the dot product is extended complex bilinearly.

The Hopf differential was originally introduced in the theory of CMC surfaces: Hopf used the holomorphic quadratic differential obtained from
the \((2,0)\)-part of the second fundamental form to prove that every immersed CMC two-sphere in \(\R^3\) is round \cite{Hopf1951}. Chern
later extended this circle of ideas to space forms \cite{Chern1983}. In harmonic-map theory, the analogous object is the \((2,0)\)-part of the
pull-back metric. For maps from Riemann surfaces it is holomorphic at critical points of the Dirichlet energy, a standard fact used
systematically by Sampson and in the subsequent theory of harmonic maps from surfaces \cite{Sampson1978,EellsWood1983,Helein}. In the present
argument the Hopf differential plays two separate roles: it yields the Pohozaev identity below, and its vanishing is exactly the
weak-conformality condition that reduces the final classification to a holomorphic/anti-holomorphic alternative.

With the above convention,
\[
q=\frac14(u_x-iu_y)\cdot(u_x-iu_y)
=\frac14\bigl(|u_x|^2-|u_y|^2-2i\,u_x\cdot u_y\bigr).
\]
Consequently \(\operatorname{Re}q\) measures the difference between the two coordinate stretch factors, while \(\operatorname{Im}q\) measures
the failure of \(u_x\) and \(u_y\) to be orthogonal. Hence \(q=0\) is precisely the local weak-conformality condition \(|u_x|=|u_y|\) and
\(u_x\cdot u_y=0\).

\begin{lemma}[Holomorphicity of the Hopf Differential]\label{lem:hopf-holomorphic}
For every smooth solution of \eqref{eq:HM}, the quadratic differential $q(z)\,dz^2$ is holomorphic in $B_1$ and continuous on
$\overline{B_1}$.
\end{lemma}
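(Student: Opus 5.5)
The plan is to compute $\partial_{\bar z}q$ directly and show that it vanishes identically in $B_1$, using only the harmonic-map equation and the sphere constraint. Continuity on $\overline{B_1}$ will then come from the regularity statement.

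\textbf{Step 1: reduce $\partial_{\bar z}q$ to a dot product.} By Proposition~\ref{prop:regularity}, $u\in C^\infty(\overline{B_1};\Sph)$. Hence $q=u_z\cdot u_z$ is smooth up to the boundary, and in particular continuous on $\overline{B_1}$. Differentiating, and using the complex bilinearity of the dot product, gives
\[
\partial_{\bar z}q=2\,u_{z\bar z}\cdot u_z .
\]
We also have the identity $u_{z\bar z}=\tfrac14\Delta u$, which follows from $\partial_z\partial_{\bar z}=\tfrac14(\partial_x^2+\partial_y^2)$. Inserting the equation \eqref{eq:HM} yields
\[
\partial_{\bar z}q=-\tfrac12|\nabla u|^2\,u\cdot u_z .
\]

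\textbf{Step 2: use the sphere constraint.} Since $|u|^2\equiv1$, differentiating in $x$ and in $y$ gives $u\cdot u_x=u\cdot u_y=0$. Therefore $u\cdot u_z=\tfrac12(u\cdot u_x-i\,u\cdot u_y)=0$. It follows that $\partial_{\bar z}q\equiv0$ in $B_1$, so $q$ is holomorphic there and $q\,dz^2$ is a holomorphic quadratic differential.

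\textbf{Where the difficulty lies.} No step here is a real obstacle once Proposition~\ref{prop:regularity} is available. The only care needed is the convention that the dot product is extended complex bilinearly, not sesquilinearly, so that $\partial_{\bar z}(u_z\cdot u_z)=2\,u_{z\bar z}\cdot u_z$ holds with no conjugates appearing.

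For a merely $H^1$ weak solution, one would instead derive holomorphicity from stationarity under domain variations, giving a weak Cauchy--Riemann equation for $q\in L^1$, and conclude by Weyl's lemma. The smooth setting makes this unnecessary.
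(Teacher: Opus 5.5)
Your proposal is correct and follows the paper's own proof essentially line for line. You compute $\partial_{\bar z}q=2u_{z\bar z}\cdot u_z=\tfrac12\Delta u\cdot u_z$, substitute the equation, and conclude from $u\cdot u_z=0$ (the sphere constraint); continuity on $\overline{B_1}$ comes from smoothness up to the boundary, as in Proposition~\ref{prop:regularity}.
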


\begin{proof}
Since $u$ is smooth, continuity up to $\partial B_1$ is immediate. In the interior,
\[
\partial_{\bar z}q=2u_{z\bar z}\cdot u_z=\frac12\Delta u\cdot u_z.
\]
By \eqref{eq:HM}, $\Delta u=-|\nabla u|^2u$, while differentiating $|u|^2=1$ gives $u\cdot u_z=0$. Hence $\partial_{\bar z}q=0$.
\end{proof}

In polar coordinates $z=re^{i\theta}$ one has
\begin{equation}\label{eq:Hopf-polar}
4e^{2i\theta}q
=
|u_r|^2-r^{-2}|u_\theta|^2-2ir^{-1}u_r\cdot u_\theta.
\end{equation}
Indeed,
\[
u_z=\frac12e^{-i\theta}(u_r-ir^{-1}u_\theta),
\]
and \eqref{eq:Hopf-polar} follows by direct expansion. Along the boundary,
\[
u_\theta(1,\theta)=(-R\sin\theta,R\cos\theta,0),
\qquad |u_\theta(1,\theta)|=R,
\]
so \eqref{eq:Hopf-polar} becomes
\begin{equation}\label{eq:Hopf-boundary}
4e^{2i\theta}q(e^{i\theta})
=
|u_r(1,\theta)|^2-R^2-2i\,u_r(1,\theta)\cdot u_\theta(1,\theta).
\end{equation}

\begin{lemma}[Pohozaev Identity]\label{lem:pohozaev}
Every smooth solution of \eqref{eq:HM} satisfies
\begin{equation}\label{eq:pohozaev}
\int_0^{2\pi}|u_r(1,\theta)|^2\,d\theta=2\pi R^2.
\end{equation}
\end{lemma}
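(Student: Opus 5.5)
The plan is to integrate the holomorphic function $z^2q(z)$ over the boundary circle and let Cauchy's theorem supply the identity. By Lemma~\ref{lem:hopf-holomorphic}, $q$ is holomorphic in $B_1$ and continuous on $\overline{B_1}$; by Proposition~\ref{prop:regularity} it is in fact smooth up to the boundary. Hence $z^2q(z)$ is holomorphic in the disk and continuous on the closure, with value $0$ at $z=0$. The mean value property, or equivalently Cauchy's integral formula applied to $z\,q(z)$, then gives
\[
\int_0^{2\pi} e^{2i\theta}q(e^{i\theta})\,d\theta=\frac{1}{i}\oint_{\partial B_1} z\,q(z)\,dz = 0 .
\]
To apply this rigorously, I would first integrate over circles $|z|=r<1$ and then let $r\to1$, using continuity of $q$ on $\overline{B_1}$.

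Next I substitute the boundary form \eqref{eq:Hopf-boundary} and take real parts:
\[
0=\operatorname{Re}\int_0^{2\pi}4e^{2i\theta}q(e^{i\theta})\,d\theta=\int_0^{2\pi}\bigl(|u_r(1,\theta)|^2-R^2\bigr)\,d\theta .
\]
This is exactly \eqref{eq:pohozaev}. The imaginary part gives, as a by-product, $\int_0^{2\pi} u_r\cdot u_\theta\,d\theta=0$ on $\partial B_1$.

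An alternative route is the classical Pohozaev computation: multiply the equation by $x\cdot\nabla u$, note that $u\cdot u_r=0$ kills the right-hand side, and integrate by parts. In two dimensions the bulk terms cancel, leaving $\int_{\partial B_1}(|u_r|^2-|u_\theta|^2)=0$ together with $|u_\theta|=R$ on the boundary. I would present the Cauchy-integral version because it is shorter and relies only on the two preceding results.

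The only delicate point is justifying the boundary passage. Since Proposition~\ref{prop:regularity} gives $u\in C^\infty(\overline{B_1})$, $q$ is continuous up to $\partial B_1$, and the $r\to1$ limit is routine. So I do not expect a genuine obstacle here. The substantive content is the holomorphicity already proved in Lemma~\ref{lem:hopf-holomorphic}, together with the fact that the tangential derivative $|u_\theta|=R$ on the boundary is fixed by the trace $g_R$.
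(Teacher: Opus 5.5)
Your proposal is correct and follows the paper's proof: Cauchy's theorem for $z\,q(z)$ on $|z|=1$ gives $\int_0^{2\pi}e^{2i\theta}q(e^{i\theta})\,d\theta=0$, and taking real parts after substituting \eqref{eq:Hopf-boundary} yields \eqref{eq:pohozaev}. Your added $r\uparrow1$ justification and the aside on the classical $x\cdot\nabla u$ Pohozaev computation are both correct, though the paper needs neither.
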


\begin{proof}
By Lemma~\ref{lem:hopf-holomorphic}, $q$ is holomorphic, so $zq(z)$ is holomorphic, and therefore
\[
\int_{|z|=1}zq(z)\,dz=0.
\]
Writing $z=e^{i\theta}$ and $dz=ie^{i\theta}\,d\theta$, we get
\[
\int_0^{2\pi}e^{2i\theta}q(e^{i\theta})\,d\theta=0.
\]
Taking real parts and using \eqref{eq:Hopf-boundary} gives
\[
0=\frac14\int_0^{2\pi}\bigl(|u_r(1,\theta)|^2-R^2\bigr)\,d\theta,
\]
which is exactly \eqref{eq:pohozaev}.
\end{proof}

\section{The auxiliary potential and the projected boundary curve}\label{sec:auxiliary}

\subsection{Definition and boundary formulas}

The harmonic-map equation implies the existence of a global potential $X\colon B_1\to\R^3$, unique up to an additive constant, such that
\begin{equation}\label{eq:X-def}
X_x=-u\times u_y,
\qquad
X_y=u\times u_x.
\end{equation}
To see this directly, set
\[
A=-u\times u_y,\qquad B=u\times u_x,\qquad \omega=A\,dx+B\,dy.
\]
For a vector-valued one-form \(\omega=A\,dx+B\,dy\),
\[
d\omega=(B_x-A_y)\,dx\wedge dy.
\]
Here
\[
B_x=(u\times u_x)_x=u_x\times u_x+u\times u_{xx}=u\times u_{xx},
\]
and
\[
A_y=(-u\times u_y)_y=-u_y\times u_y-u\times u_{yy}=-u\times u_{yy}.
\]
Therefore
\[
B_x-A_y=u\times(u_{xx}+u_{yy})=u\times\Delta u.
\]
Since \(u\) solves \eqref{eq:HM}, \(\Delta u=-|\nabla u|^2u\), and hence
\[
B_x-A_y=-|\nabla u|^2(u\times u)=0.
\]
Thus \(d\omega=0\). Since
\[
H^1_{\mathrm{dR}}(B_1)=0,
\]
every closed one-form on the disk is exact. Applied componentwise to the \(\R^3\)-valued one-form \(\omega\), this gives a map \(X\colon
B_1\to\R^3\) such that \(dX=\omega\), namely \eqref{eq:X-def}. The additive constant in \(X\) is irrelevant for all boundary length and
projected-area calculations below.

The same first-order reconstruction is standard in the Gauss-map/CMC correspondence. The Ruh--Vilms theorem identifies harmonicity of the
Gauss map with parallel mean curvature vector in Euclidean submanifold theory \cite{RuhVilms1970}; in the surface case, the inverse
reconstruction formulas of Kenmotsu and related Weierstrass-type representations use a closed one-form equivalent, up to the choice of
orientation and sign convention, to \(u\times *du\) \cite{Kenmotsu1979}. Here we do not use \(X\) as an immersion; only its boundary curve and
the algebraic area of its horizontal projection enter the proof.

In polar coordinates,
\begin{equation}\label{eq:X-polar}
X_r=-\frac1r\,u\times u_\theta,
\qquad
X_\theta=r\,u\times u_r.
\end{equation}
Let
\[
\Gamma(\theta)=X(e^{i\theta}),
\qquad
\gamma(\theta)=\bigl(X_1(e^{i\theta}),X_2(e^{i\theta})\bigr)
\]
be the boundary curve of $X$ and its orthogonal projection onto the plane $x_3=0$; see the left-hand panel of Figure~\ref{fig:proof} for a
schematic.

\begin{lemma}\label{lem:length-height}
Along $\partial B_1$ one has
\begin{equation}\label{eq:Gamma-speed}
|\Gamma_\theta(\theta)|=|u_r(1,\theta)|,
\end{equation}
\begin{equation}\label{eq:X3-derivative}
\partial_\theta X_3(1,\theta)=u_r(1,\theta)\cdot u_\theta(1,\theta),
\end{equation}
and therefore
\begin{equation}\label{eq:gamma-pointwise}
|\gamma_\theta(\theta)|^2
=
|u_r(1,\theta)|^2-\bigl(u_r(1,\theta)\cdot u_\theta(1,\theta)\bigr)^2
\le |u_r(1,\theta)|^2.
\end{equation}
\end{lemma}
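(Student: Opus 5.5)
The plan is to read everything off the polar formula \eqref{eq:X-polar} at \(r=1\), which gives \(\Gamma_\theta(\theta)=u\times u_r\) evaluated at \((1,\theta)\). For \eqref{eq:Gamma-speed} we use \(|u|=1\), which after differentiation in \(r\) gives \(u\cdot u_r=0\). Thus \(u\) and \(u_r\) are orthogonal, and \(|u\times u_r|=|u|\,|u_r|=|u_r|\), which is the first identity.

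For \eqref{eq:X3-derivative} we take the third component of \(u\times u_r\), namely \(u_1u_{r,2}-u_2u_{r,1}\). On \(\partial B_1\) we have \(u=g_R\), so \(u_1=R\cos\theta\) and \(u_2=R\sin\theta\). Hence
\[
\partial_\theta X_3=R\cos\theta\,u_{r,2}-R\sin\theta\,u_{r,1}.
\]
The boundary formula \(u_\theta(1,\theta)=(-R\sin\theta,R\cos\theta,0)\) recorded before \eqref{eq:Hopf-boundary} shows that this expression is exactly \(u_r\cdot u_\theta\). Equivalently, \(e_3\cdot(u\times u_r)=u_r\cdot(e_3\times u)\), and on the latitude circle \(e_3\times g_R=(-R\sin\theta,R\cos\theta,0)=u_\theta\).

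Finally, \eqref{eq:gamma-pointwise} follows from the orthogonal splitting \(|\Gamma_\theta|^2=|\gamma_\theta|^2+(\partial_\theta X_3)^2\), since \(\gamma\) is the horizontal projection of \(\Gamma\). Inserting the two identities above gives
\[
|\gamma_\theta|^2=|u_r|^2-(u_r\cdot u_\theta)^2\le|u_r|^2.
\]

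There is no real obstacle here. The only points needing care are two. First, the boundary values \(u_r\) and \(X\) must be classical, which Proposition~\ref{prop:regularity} guarantees. Second, the sign conventions in \eqref{eq:X-polar} must be checked, and any sign error is harmless because only squares and a single component identity are used.
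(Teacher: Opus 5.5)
Your proof is correct and is essentially the paper's argument: $\Gamma_\theta=u\times u_r$ from \eqref{eq:X-polar}, $u\cdot u_r=0$ for the speed, $e_3\cdot(u\times u_r)=u_r\cdot(e_3\times u)$ with $e_3\times g_R=u_\theta$ on $\partial B_1$ for the vertical component, and the orthogonal splitting $|\Gamma_\theta|^2=|\gamma_\theta|^2+(\partial_\theta X_3)^2$. One small correction to your closing remark: a sign error in \eqref{eq:X-polar} would not be fully harmless for the lemma as stated, because it would flip the sign in \eqref{eq:X3-derivative} (only \eqref{eq:Gamma-speed} and \eqref{eq:gamma-pointwise} are sign-insensitive); in any case the formula is right, since $X_\theta=-r\sin\theta\,X_x+r\cos\theta\,X_y=r\,u\times(\cos\theta\,u_x+\sin\theta\,u_y)=r\,u\times u_r$.
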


\begin{proof}
By \eqref{eq:X-polar},
\[
\Gamma_\theta=X_\theta=u\times u_r
\qquad\text{on }\partial B_1.
\]
Since $u\cdot u_r=0$ and $|u|=1$, this implies \eqref{eq:Gamma-speed}. For the third component,
\[
\partial_\theta X_3=(u\times u_r)\cdot e_3=u_r\cdot(e_3\times u).
\]
Along $\partial B_1$,
\[
e_3\times u(1,\theta)=(-R\sin\theta,R\cos\theta,0)=u_\theta(1,\theta),
\]
which proves \eqref{eq:X3-derivative}. Finally,
\[
|\gamma_\theta|^2=|\Gamma_\theta|^2-|\partial_\theta X_3|^2,
\]
and \eqref{eq:gamma-pointwise} follows.
\end{proof}

\subsection{A projection-Jacobian identity}

Let $P\colon\R^3\to\R^2$ be the orthogonal projection
\[
P(x_1,x_2,x_3)=(x_1,x_2).
\]

\begin{lemma}[Projection-Jacobian Identity]\label{lem:projection-jacobian}
For every $x\in B_1$,
\begin{equation}\label{eq:proj-Jac}
\det\nabla(P\circ X)=\det\nabla(P\circ u).
\end{equation}
Consequently, the projected boundary curve $\gamma$ has signed, or algebraic, area
\[
\AlgArea(\gamma)=\pi R^2.
\]
\end{lemma}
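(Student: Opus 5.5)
The plan is to prove the pointwise identity \eqref{eq:proj-Jac} by a cross-product computation. The area statement then follows from Stokes' theorem, because the Jacobian determinant is a null Lagrangian. Both steps use only the smoothness up to the boundary from Proposition~\ref{prop:regularity}.

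\emph{Step 1: the pointwise identity.} For any map $F\colon B_1\to\R^3$,
\[
\det\nabla(P\circ F)=(F_x)_1(F_y)_2-(F_x)_2(F_y)_1=e_3\cdot(F_x\times F_y).
\]
So it suffices to show $X_x\times X_y=u_x\times u_y$. By \eqref{eq:X-def},
\[
X_x\times X_y=-(u\times u_y)\times(u\times u_x).
\]
I will apply the elementary identity $(a\times b)\times(a\times c)=\bigl(a\cdot(b\times c)\bigr)\,a$, which is recorded in Appendix~\ref{app:vectors}. This gives
\[
X_x\times X_y=-\bigl(u\cdot(u_y\times u_x)\bigr)u=\bigl(u\cdot(u_x\times u_y)\bigr)u.
\]
Next, $|u|=1$ gives $u\cdot u_x=u\cdot u_y=0$. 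Hence $u_x$ and $u_y$ are tangent to $\Sph$ at $u$, and their cross product is parallel to $u$:
\[
u_x\times u_y=\bigl(u\cdot(u_x\times u_y)\bigr)u.
\]
Combining the two displays yields $X_x\times X_y=u_x\times u_y$. Dotting with $e_3$ gives \eqref{eq:proj-Jac}. Note that this step does not use the harmonic map equation; that equation is needed only for $X$ to exist.

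\emph{Step 2: the algebraic area.} For a $C^1(\overline{B_1})$ map $G=(G_1,G_2)$ we have $\det\nabla G\,dx\wedge dy=dG_1\wedge dG_2=\tfrac12 d(G_1\,dG_2-G_2\,dG_1)$. Stokes' theorem then gives
\[
\int_{B_1}\det\nabla G=\tfrac12\oint_{\partial B_1}(G_1\,dG_2-G_2\,dG_1),
\]
with $\partial B_1$ oriented counterclockwise, i.e.\ by increasing $\theta$. The right-hand side is by definition the algebraic area of the boundary curve $G(e^{i\theta})$.

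Take first $G=P\circ X$. Its boundary curve is $\gamma$, so $\AlgArea(\gamma)=\int_{B_1}\det\nabla(P\circ X)$. By Step 1 this equals $\int_{B_1}\det\nabla(P\circ u)$.

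Now take $G=P\circ u$. Its boundary curve is $P\circ g_R=(R\cos\theta,R\sin\theta)$, which traverses the circle of radius $R$ once counterclockwise. The line integral is therefore $\tfrac12\int_0^{2\pi}R^2\,d\theta=\pi R^2$, and so $\AlgArea(\gamma)=\pi R^2$.

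The only delicate point is Step 1, specifically the sign bookkeeping in the triple cross-product identity and the order $u_y\times u_x$ versus $u_x\times u_y$. I would double-check it against the explicit solution $u^+$ in Appendix~\ref{app:explicit}.
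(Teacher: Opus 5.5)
Your proposal is correct and follows the paper's proof essentially step for step: you use the same triple cross-product identity from Appendix~\ref{app:vectors} with $a=u$, $b=u_y$, $c=u_x$, the same tangency argument giving $u_x\times u_y\parallel u$, and the same Green's-theorem reduction of $\int_{B_1}\det\nabla(P\circ u)$ to the once-around circle of radius $R$. The only cosmetic difference is that you prove the full vector identity $X_x\times X_y=u_x\times u_y$ before dotting with $e_3$, whereas the paper compares only the third components $u_3\,u\cdot(u_x\times u_y)$; your sign bookkeeping is correct.
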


\begin{proof}
Since $\det\nabla(P\circ X)=e_3\cdot(X_x\times X_y)$, \eqref{eq:X-def} gives
\[
\det\nabla(P\circ X)
=e_3\cdot\bigl((-u\times u_y)\times(u\times u_x)\bigr).
\]
We use the elementary identity from Appendix~\ref{app:vectors}:
\begin{equation}\label{eq:vector-identity}
(a\times b)\times(a\times c)=\det(a,b,c)\,a,
\qquad a,b,c\in\R^3.
\end{equation}
Applying \eqref{eq:vector-identity} with $a=u$, $b=u_y$, and $c=u_x$, and keeping track of the minus sign, we obtain
\[
(-u\times u_y)\times(u\times u_x)=\bigl(u\cdot(u_x\times u_y)\bigr)u.
\]
Therefore
\begin{equation}\label{eq:detPX}
\det\nabla(P\circ X)=u_3\,u\cdot(u_x\times u_y).
\end{equation}
On the other hand,
\[
\det\nabla(P\circ u)=\det\nabla(u_1,u_2)=e_3\cdot(u_x\times u_y).
\]
Because $u_x,u_y\in T_u\Sph$, the vector $u_x\times u_y$ is parallel to $u$; hence
\[
e_3\cdot(u_x\times u_y)=u_3\,u\cdot(u_x\times u_y).
\]
Together with \eqref{eq:detPX}, this proves \eqref{eq:proj-Jac}.

Integrating over $B_1$ and using Green's theorem,
\[
\AlgArea(\gamma)=\int_{B_1}\det\nabla(P\circ X)\,dxdy
=\int_{B_1}\det\nabla(P\circ u)\,dxdy
=\frac12\int_{\partial B_1}(u_1\,du_2-u_2\,du_1).
\]
Since $(u_1,u_2)|_{\partial B_1}=(R\cos\theta,R\sin\theta)$ winds once positively around the circle of radius $R$, the last integral equals
$\pi R^2$.
\end{proof}

\subsection{A planar isoperimetric inequality for algebraic area}

\begin{lemma}\label{lem:isoperimetric}
Let $\sigma\colon[0,2\pi]\to\R^2$ be a closed $C^1$ curve. Define its signed, or algebraic, area by
\[
\AlgArea(\sigma)=\frac12\int_0^{2\pi}\bigl(\sigma_1\sigma_{2,\theta}-\sigma_2\sigma_{1,\theta}\bigr)\, d\theta.
\]
Then
\begin{equation}\label{eq:isoperimetric}
\Length(\sigma)^2\ge 4\pi |\AlgArea(\sigma)|.
\end{equation}
\end{lemma}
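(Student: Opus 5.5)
The plan is to use the Fourier-series (Hurwitz) proof of the isoperimetric inequality, which handles signed area directly and needs neither simplicity nor convexity of the curve. First I will reduce to constant-speed parametrization. If $\Length(\sigma)=0$ the curve is constant, so both sides vanish and there is nothing to prove. Otherwise, reparametrizing by a multiple of arclength changes the algebraic area at most by a sign, since the area functional is invariant under orientation-preserving reparametrization. The absolute value in \eqref{eq:isoperimetric} absorbs any sign. The one technical point is that a $C^1$ curve may have $\sigma_\theta=0$ on a set, so the arclength reparametrization need not be $C^1$.

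To avoid this, I will work in the class of absolutely continuous closed curves with $L^2$ derivative. In that class the arclength reparametrization $\tilde\sigma(s)$, $s\in[0,2\pi]$, is Lipschitz with $|\tilde\sigma_s|=L/(2\pi)$ a.e. The area formula $\frac12\int(\sigma_1\sigma_{2}'-\sigma_2\sigma_{1}')$ is invariant under monotone absolutely continuous changes of variable, by the change-of-variables formula for absolutely continuous functions. I expect this regularity bookkeeping to be the only mildly delicate step.

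Next, identify $\mathbb R^2$ with $\C$ and write $\tilde\sigma(s)=\sum_{k\in\mathbb Z}c_ke^{iks}$ in $L^2$. By Parseval,
\[
\frac{L^2}{2\pi}=\int_0^{2\pi}|\tilde\sigma_s|^2\,ds=2\pi\sum_k k^2|c_k|^2 .
\]
The area is
\[
\AlgArea=\frac12\operatorname{Im}\int_0^{2\pi}\overline{\tilde\sigma}\,\tilde\sigma_s\,ds=\pi\sum_k k|c_k|^2 ,
\]
which converges because $\tilde\sigma\in L^2$ and $\tilde\sigma_s\in L^2$.

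Since $|k|\le k^2$ for every integer $k$,
\[
4\pi|\AlgArea|\le 4\pi^2\sum_k|k||c_k|^2\le 4\pi^2\sum_k k^2|c_k|^2=L^2 ,
\]
which is \eqref{eq:isoperimetric}.

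I also plan to record the equality case, since Section~\ref{sec:rigidity} needs saturation. Equality forces $c_k=0$ for $|k|\ge2$, and at most one of $c_{1},c_{-1}$ is nonzero. Hence $\tilde\sigma$ is a round circle traversed once at constant speed.
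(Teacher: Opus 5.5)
Your proof is correct and is essentially the paper's argument: both pass to a constant-speed parametrization and bound the signed area by Fourier/Wirtinger. The paper uses Cauchy--Schwarz plus Wirtinger for the mean-zero translate; you use Parseval and the termwise Hurwitz bound $|k|\le k^2$, which makes the translation unnecessary. Your absolutely continuous reparametrization handles zero-speed points more carefully than the paper's ``$C^1$ approximation'' sketch, but the equality case you add is not actually needed, since Proposition~\ref{prop:boundary-rigidity} uses only the inequality and gets its rigidity from equality in Cauchy--Schwarz and in the projection-length bound.
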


\begin{proof}
This is the algebraic-area form of the planar isoperimetric inequality for closed curves. Crucially, the curves do not need to be simple. We
cite it in the form explained by Osserman \cite{Osserman1978}: the classical Wirtinger inequality implies the isoperimetric inequality for
every smooth closed plane curve, with the area understood through the line integral defining algebraic area, and the same discussion records
the corresponding winding-number interpretation for non-simple curves \cite[\S 1, Lemmas~1.1--1.2 and (1.8)]{Osserman1978}.

For completeness, we give a sketch of proof. By $C^1$ approximation it suffices to consider smooth closed curves. Reparametrize the curve by
constant speed and translate it so that its mean over one period is zero. Then
\[
2\AlgArea(\sigma)=\int_0^{2\pi}\sigma\cdot J\sigma_\theta\,d\theta,
\qquad J(x_1,x_2)=(x_2,-x_1).
\]
Cauchy--Schwarz gives
\[
2|\AlgArea(\sigma)|\le \|\sigma\|_{L^2}\|\sigma_\theta\|_{L^2},
\]
and the periodic Wirtinger inequality applied componentwise to the mean-zero map gives
\[
\|\sigma\|_{L^2}\le \|\sigma_\theta\|_{L^2}.
\]
Since the parametrization has constant speed, $\|\sigma_\theta\|_{L^2}^2=\Length(\sigma)^2/(2\pi)$. Therefore
\[
|\AlgArea(\sigma)|\le \frac{\Length(\sigma)^2}{4\pi},
\]
which is exactly \eqref{eq:isoperimetric}. Reversing orientation if necessary handles the sign of the algebraic area.
\end{proof}

\section{Boundary rigidity}\label{sec:rigidity}

\begin{proposition}[Boundary Rigidity]\label{prop:boundary-rigidity}
Every smooth solution of \eqref{eq:HM} satisfies
\[
|u_r(1,\theta)|\equiv R,
\qquad
u_r(1,\theta)\cdot u_\theta(1,\theta)\equiv0.
\]
Equivalently,
\begin{equation}\label{eq:q-boundary-zero}
q(e^{i\theta})\equiv0
\qquad\text{on }\partial B_1.
\end{equation}
\end{proposition}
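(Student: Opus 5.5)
The plan is to squeeze the length $L_h=\Length(\gamma)=\int_0^{2\pi}|\gamma_\theta|\,d\theta$ of the projected boundary curve between two bounds that coincide: a lower bound from the isoperimetric inequality and an upper bound from the Pohozaev identity. Equality in every intermediate inequality will then give the two pointwise identities.

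By Proposition~\ref{prop:regularity}, $X$ is smooth up to $\partial B_1$, so $\gamma$ is a closed $C^1$ plane curve. Closedness holds because $X$ is a single-valued map on $\overline{B_1}$.

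\emph{Lower bound.} Lemma~\ref{lem:projection-jacobian} gives $\AlgArea(\gamma)=\pi R^2$. Lemma~\ref{lem:isoperimetric} then gives $L_h^2\ge 4\pi\cdot\pi R^2=4\pi^2R^2$. Here $\Length(\gamma)$ is the parametrized length $\int|\gamma_\theta|\,d\theta$, which is the quantity the Wirtinger argument controls; the constant-speed reparametrization in that proof does not change it.

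\emph{Upper bound.} The pointwise inequality \eqref{eq:gamma-pointwise} gives $|\gamma_\theta|\le|u_r(1,\theta)|$. Integrating, applying Cauchy--Schwarz, and then the Pohozaev identity \eqref{eq:pohozaev}, we get
\[
L_h\le\int_0^{2\pi}|u_r(1,\theta)|\,d\theta\le\sqrt{2\pi}\Bigl(\int_0^{2\pi}|u_r(1,\theta)|^2\,d\theta\Bigr)^{1/2}=\sqrt{2\pi}\sqrt{2\pi R^2}=2\pi R.
\]
Combining the two bounds gives $4\pi^2R^2\le L_h^2\le4\pi^2R^2$, so every inequality in the chain is an equality.

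Next I would extract the pointwise information from the equality cases. Equality in Cauchy--Schwarz forces $|u_r(1,\cdot)|$ to be constant. By \eqref{eq:pohozaev} that constant is $R$, so $|u_r(1,\theta)|\equiv R$. Equality in $\int|\gamma_\theta|\le\int|u_r|$ means the integral of the nonnegative continuous function $|u_r|-|\gamma_\theta|$ vanishes, so $|\gamma_\theta|=|u_r|$ at every $\theta$. Then \eqref{eq:gamma-pointwise} gives $(u_r\cdot u_\theta)^2\equiv0$ on $\partial B_1$. Finally, substituting $|u_r|^2=R^2$ and $u_r\cdot u_\theta=0$ into \eqref{eq:Hopf-boundary} makes the right-hand side vanish, which yields $q(e^{i\theta})\equiv0$. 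Conversely, \eqref{eq:Hopf-boundary} shows that $q\equiv0$ on $\partial B_1$ implies both identities, so the two formulations in the statement are equivalent.

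The only delicate point I expect is making sure the isoperimetric inequality is applied to exactly the quantities that the upper bound controls. Specifically, the area must be the algebraic area computed by Green's theorem in Lemma~\ref{lem:projection-jacobian}, which is legitimate because $X\in C^1(\overline{B_1})$. The length must be the parametrized length rather than the length of the image set, since $\gamma$ may fail to be simple. Once these are matched, the equality-case analysis needs only continuity of the boundary quantities, which Proposition~\ref{prop:regularity} provides.
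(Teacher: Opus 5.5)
Your proposal is correct and follows the paper's proof essentially step for step: the paper also squeezes $4\pi^2R^2\le L_h^2\le L^2\le 4\pi^2R^2$, using the algebraic-area identity with the isoperimetric inequality for the lower bound and Cauchy--Schwarz with the Pohozaev identity for the upper bound. It then reads off $|u_r|\equiv R$ from equality in Cauchy--Schwarz, and $u_r\cdot u_\theta\equiv0$ from pointwise equality in \eqref{eq:gamma-pointwise}. The only cosmetic difference is that the paper routes the upper bound through $L=\Length(\Gamma)=\int_0^{2\pi}|u_r(1,\theta)|\,d\theta$ and the $1$-Lipschitz projection, whereas you integrate the pointwise bound $|\gamma_\theta|\le|u_r|$ directly, which is the same inequality.
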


\begin{proof}
Let $X$, $\Gamma$, and $\gamma$ be as in Section~\ref{sec:auxiliary}. Set
\[
L=\Length(\Gamma)=\int_0^{2\pi}|\Gamma_\theta|\,d\theta,
\qquad
L_h=\Length(\gamma)=\int_0^{2\pi}|\gamma_\theta|\,d\theta.
\]
By Lemma~\ref{lem:length-height},
\[
L=\int_0^{2\pi}|u_r(1,\theta)|\,d\theta.
\]
Hence Cauchy--Schwarz and Lemma~\ref{lem:pohozaev} yield
\begin{equation}\label{eq:L-upper}
L^2
\le 2\pi\int_0^{2\pi}|u_r(1,\theta)|^2\,d\theta
=4\pi^2R^2.
\end{equation}
Because orthogonal projection is $1$-Lipschitz,
\begin{equation}\label{eq:Lh-proj}
L_h\le L.
\end{equation}
On the other hand, Lemma~\ref{lem:projection-jacobian} and Lemma~\ref{lem:isoperimetric} give
\begin{equation}\label{eq:Lh-lower}
L_h^2\ge 4\pi |\AlgArea(\gamma)|=4\pi^2R^2.
\end{equation}
Combining \eqref{eq:L-upper}, \eqref{eq:Lh-proj}, and \eqref{eq:Lh-lower}, we obtain
\[
4\pi^2R^2\le L_h^2\le L^2\le 4\pi^2R^2,
\]
so all inequalities are equalities and
\[
L_h=L=2\pi R.
\]
This is the only point in the proof where sharpness is used. The lower bound comes from the exact algebraic area of the projected curve, while
the upper bound comes from the exact Pohozaev identity. Since the two sharp constants coincide, there is no room for strict inequality at any
intermediate step.

Equality in Cauchy--Schwarz in \eqref{eq:L-upper} implies that $|u_r(1,\theta)|$ is constant in $\theta$. Together with
Lemma~\ref{lem:pohozaev}, this gives
\begin{equation}\label{eq:ur-constant}
|u_r(1,\theta)|\equiv R.
\end{equation}
Next, \eqref{eq:gamma-pointwise} gives the pointwise estimate $|\gamma_\theta|\le |\Gamma_\theta|$. Since the nonnegative continuous function
$|\Gamma_\theta|-|\gamma_\theta|$ integrates to $L-L_h=0$, it vanishes identically. Therefore
\[
|\gamma_\theta|\equiv |\Gamma_\theta|,
\]
and \eqref{eq:gamma-pointwise} implies
\begin{equation}\label{eq:boundary-orthogonality}
 u_r(1,\theta)\cdot u_\theta(1,\theta)\equiv0.
\end{equation}
Finally, substituting \eqref{eq:ur-constant} and \eqref{eq:boundary-orthogonality} into \eqref{eq:Hopf-boundary} yields
\eqref{eq:q-boundary-zero}.
\end{proof}

\begin{remark}
The proof uses only the equality cases that are needed for the Hopf differential: equality in Cauchy--Schwarz gives $|u_r|\equiv R$, and
equality in the projection-length inequality gives $u_r\cdot u_\theta\equiv0$. No global embeddedness of $\gamma$ is assumed. In particular,
the isoperimetric inequality is applied to algebraic area, so possible self-intersections would not affect the argument. If one also invokes
the equality case in the Wirtinger proof of Lemma~\ref{lem:isoperimetric}, then $\gamma$ is an equi-speed round circle; together with
$\partial_\theta X_3=0$ this recovers the horizontal-circle picture verified explicitly in Appendix~\ref{app:explicit}.
\end{remark}

\section{Conformal classification on a boundary collar}\label{sec:conformal}

In this section we classify the solutions under the additional assumption $q\equiv0$. This is the usual weak-conformal harmonic-map dichotomy
in a form tailored to the present boundary trace; compare the standard meromorphic/anti-meromorphic classification for conformal harmonic maps
into $\mathbb{CP}^1$ \cite{EellsWood1983,Helein}. The argument is local near the boundary at first: the boundary trace avoids the south pole,
so the south stereographic coordinate is defined on a collar annulus. Figure~\ref{fig:collar} illustrates this reduction and the two possible
orientations of the conformal coordinate map.

\begin{figure}[t]
\centering
\begin{tikzpicture}[>=Stealth, line cap=round, line join=round, font=\scriptsize]
  \begin{scope}[shift={(-3.85,0)}, scale=1.02]
    \fill[blue!8] (0,0) circle (1.68);
    \fill[white] (0,0) circle (1.08);
    \draw[red!70!black, thick] (0,0) circle (1.68);
    \draw[red!70!black, thick, dashed] (0,0) circle (1.08);
    \draw[red!70!black, very thick, ->] (1.68,0) arc[start angle=0,end angle=65,radius=1.68];
    \node at (0,0) {$B_1$};
    \node[blue!70!black, fill=white, inner sep=0.8pt] at (0,-1.36) {$A=\{r_1<|z|<1\}$};
    \node[fill=white, inner sep=0.8pt] at (-0.55,1.92) {$|z|=1$};
    \node[red!70!black, anchor=west, fill=white, inner sep=0.8pt] (bd) at (1.12,0.90) {$w(e^{i\theta})=\rho e^{i\theta}$};
  \end{scope}
  \draw[->, very thick] (-1.38,0.08) -- (0.26,0.08);
  \node[above] at (-0.56,0.08) {$w=\pi_S\circ u$};
  \begin{scope}[shift={(2.95,0)}, scale=1.02]
    \draw[->] (-1.72,0) -- (1.92,0) node[right] {$\Re w$};
    \draw[->] (0,-1.50) -- (0,1.72) node[above] {$\Im w$};
    \draw[red!70!black, very thick] (0,0) circle (1.05);
    \draw[->, blue!75!black, very thick] (1.05,0) arc[start angle=0,end angle=56,radius=1.05];
    \draw[->, green!55!black, very thick] (0.55,-0.89) arc[start angle=-58,end angle=-114,radius=1.05];
    \node[red!70!black, anchor=center, fill=white, inner sep=0.8pt] (circlelabel) at (0.91,-0.55) {$|w|=\rho$};
    \node[blue!75!black, anchor=east, fill=white, inner sep=0.8pt] (pos) at (-0.70,1.48) {$J_w>0:\; w=\rho z$};
    \draw[->, blue!75!black] (pos.south east) -- (0.93,0.49);
    \node[green!55!black, anchor=east, fill=white, inner sep=0.8pt] (neg) at (-0.70,-1.40) {$J_w<0:\; w=\rho/\bar z$};
    \draw[->, green!55!black] (neg.north east) -- (0.07,-1.05);
  \end{scope}
\end{tikzpicture}
\caption{The collar classification in Section~\ref{sec:conformal}. Since the boundary trace avoids the south pole, stereographic projection
gives a smooth complex-valued map \(w\) on a collar annulus \(A\). The vanishing of the Hopf differential makes \(w\) conformal there. The
nonvanishing boundary derivative fixes the sign of the real Jacobian \(J_w=\det Dw\) on a smaller collar, giving either the holomorphic branch
\(w=\rho z\) or the anti-holomorphic branch \(w=\rho/\bar z\).}
\label{fig:collar}
\end{figure}

\begin{proposition}\label{prop:conformal-classification}
Let $0<R\le1$ and let $u\in C^\infty(\overline{B_1};\Sph)$ be a solution of \eqref{eq:HM} with trace $g_R$. If $q\equiv0$ in $B_1$, then
$u=u^+$ or $u=u^-$.
\end{proposition}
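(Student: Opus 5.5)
The plan is to follow the collar strategy sketched in Figure~\ref{fig:collar}. Since $u$ is smooth up to the boundary and $u_3=\sqrt{1-R^2}>-1$ on $\partial B_1$ (for $R=1$, $u_3=0$), continuity gives $r_1<1$ such that $u$ avoids the south pole $S$ on the closed annulus $\{r_1\le|z|\le1\}$. On the open annulus $A=\{r_1<|z|<1\}$ we set $w=\pi_S\circ u$, a smooth complex-valued map with $w(e^{i\theta})=\rho e^{i\theta}$. Because $\pi_S^{-1}$ is conformal with factor $\lambda(w)=2/(1+|w|^2)$, we have $u_z\cdot u_z=\lambda^2\, w_z\overline{w}_{z}$ (the bilinear extension of the pulled-back flat metric). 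Hence $q\equiv0$ gives $w_z\,\overline{w}_{z}=0$, i.e.\ $w_z\,\overline{w_{\bar z}}=0$, so at every point of $A$ either $w_z=0$ or $w_{\bar z}=0$.

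Next we fix the orientation near the boundary. On $\partial B_1$ we have $w_\theta=i\rho e^{i\theta}\neq0$. By the pointwise alternative, $|Dw|$ does not vanish on a smaller collar $A'=\{r_2<|z|\le1\}$. On $A'$ the two alternatives are mutually exclusive: if $w_z=w_{\bar z}=0$ then $Dw=0$. Consequently $J_w=|w_z|^2-|w_{\bar z}|^2$ never vanishes on $A'$. Since $A'$ is connected, $J_w$ has a fixed sign there. Thus $w$ is holomorphic on the interior of $A'$ if $J_w>0$, and anti-holomorphic if $J_w<0$. In the anti-holomorphic case we replace $w$ by $\bar w$.

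Now we identify $w$. In the holomorphic case, the Laurent expansion $w=\sum_n a_nz^n$ on the annulus extends continuously to $|z|=1$ with boundary values $\rho e^{i\theta}$. Computing Fourier coefficients on circles $|z|=r$ and letting $r\to1$ gives $a_n=\rho\,\delta_{n1}$, so $w=\rho z$ on $A'$. In the anti-holomorphic case, $\bar w$ is holomorphic with boundary values $\rho e^{-i\theta}$. The same argument gives $\bar w=\rho z^{-1}$, i.e.\ $w=\rho/\bar z$. Pulling back with $\pi_S^{-1}$ yields $u=u^+$ or $u=u^-$ on $A'$, by the formulas in the introduction.

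Finally we extend the identification to all of $B_1$. Harmonic maps are real-analytic in the interior, as the equation is an analytic elliptic system and $u$ is smooth. Both $u$ and $u^\pm$ are real-analytic on the connected set $B_1$ and coincide on the open set $A'\cap B_1$, so they coincide on $B_1$, and on $\overline{B_1}$ by continuity.

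The main obstacle is the sign-fixing step. The pointwise dichotomy alone does not rule out switching between holomorphic and anti-holomorphic branches across a set where $Dw=0$. The nonvanishing boundary derivative removes this only on a collar, which is why we work on $A'$ and then invoke unique continuation. A second point to justify is that $u_r$ and $u_\theta$ cannot vanish simultaneously on the collar; this follows from $|u_\theta|=R>0$ on the boundary by continuity.
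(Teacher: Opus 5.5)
Your proposal is correct and follows the paper's proof essentially step for step. The shared steps are: a stereographic coordinate on a collar avoiding $S$; the identity $q=\lambda^2 w_z\overline{w_{\bar z}}$; nonvanishing of $w_\theta$ near $\partial B_1$, which fixes the sign of $J_w$ on a connected collar; Laurent coefficients computed by letting $r\uparrow1$; and real-analytic unique continuation into $B_1$. The only wording slip is that the nonvanishing of $Dw$ on the smaller collar comes from continuity of $w_\theta$ up to the boundary, not from the pointwise alternative $w_z\overline{w_{\bar z}}=0$; you already note this at the end.
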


\begin{proof}
Let $S=(0,0,-1)$ be the south pole. Since the boundary trace $g_R$ stays a positive distance away from $S$ and $u$ is continuous on
$\overline{B_1}$, we can choose $r_0<1$ sufficiently close to $1$ such that
\[
u\bigl(\{r_0\le |z|\le 1\}\bigr)\subset \Sph\setminus\{S\}.
\]
On this closed collar we may use the south-pole stereographic coordinate $\pi_S$ introduced above. Set
\[
w=\pi_S\circ u.
\]
Choose $r_1\in(r_0,1)$, to be fixed below, and set
\[
A=\{z\in\C:r_1<|z|<1\},
\qquad
\overline A=\{z\in\C:r_1\le |z|\le1\}.
\]
Then $w\in C^\infty(\overline A)$, and the boundary condition becomes
\begin{equation}\label{eq:w-boundary}
 w(e^{i\theta})=\rho e^{i\theta},
\end{equation}
where $\rho=R/(1+\sqrt{1-R^2})$.

In the stereographic coordinate $w$, the round metric on $\Sph\setminus\{S\}$ is conformal to the Euclidean metric:
\[
g_{\Sph}=\lambda(w)^2|dw|^2,
\qquad
\lambda(w)=\frac{2}{1+|w|^2}.
\]
Hence
\[
q=\langle u_z,u_z\rangle_{\Sph}=\lambda(w)^2 w_z\overline{w_{\bar z}}.
\]
Since $q\equiv0$, we have $w_z\overline{w_{\bar z}}\equiv0$ in the open collar. Equivalently, in real coordinates,
\begin{equation}\label{eq:euclidean-conformal}
|w_x|=|w_y|,
\qquad
w_x\cdot w_y=0.
\end{equation}
Thus $w$ is Euclidean-conformal in the open annulus.

From \eqref{eq:w-boundary},
\[
w_\theta(e^{i\theta})=i\rho e^{i\theta}\neq0.
\]
Taking $r_1$ sufficiently close to $1$, the smoothness of $w$ on $\overline A$ gives
\[
|w_\theta(z)|\ge \frac\rho2
\qquad\text{for every }z\in\overline A.
\]
In particular, $Dw\neq0$ in $A$. Let $J_w=\det Dw$ be the real Jacobian determinant of the map $w\colon A\subset\R^2\to\R^2$. By
\eqref{eq:euclidean-conformal},
\[
J_w^2=|w_x|^4.
\]
Since $Dw\neq0$, this shows that $J_w$ never vanishes in $A$. The annulus $A$ is connected, so the sign of $J_w$ is constant.

Suppose first that $J_w>0$ in $A$. Then the vectors $w_x$ and $w_y$ are orthogonal, have the same length, and form a positively oriented pair.
Hence
\[
w_y=iw_x
\qquad\text{in }A.
\]
Thus $w$ is holomorphic in the open annulus $A$. We emphasize that no holomorphic extension across $|z|=1$ is being used. The Laurent
expansion is only taken in $A$:
\[
w(z)=\sum_{n\in\mathbb Z}a_nz^n,
\qquad r_1<|z|<1.
\]
For $r_1<r<1$, the Laurent coefficient formula gives
\[
a_n=\frac{1}{2\pi i}\int_{|\zeta|=r}w(\zeta)\zeta^{-n-1}\,d\zeta
=\frac{r^{-n}}{2\pi}\int_0^{2\pi}w(re^{i\theta})e^{-in\theta}\,d\theta.
\]
Since $w\in C^\infty(\overline A)$, the convergence
\[
w(re^{i\theta})\longrightarrow w(e^{i\theta})
\qquad\text{as }r\uparrow1
\]
is uniform in $\theta$. Letting $r\uparrow1$ in the preceding formula and using \eqref{eq:w-boundary}, we obtain
\[
a_n=\frac1{2\pi}\int_0^{2\pi}\rho e^{i\theta}e^{-in\theta}\,d\theta
=\rho\delta_{n1}.
\]
Therefore
\[
w(z)=\rho z
\qquad\text{in }A,
\]
and hence $u=u^+$ in $A$.

Suppose instead that $J_w<0$ in $A$. Then the pair $(w_x,w_y)$ is negatively oriented, and \eqref{eq:euclidean-conformal} gives
\[
w_y=-iw_x
\qquad\text{in }A.
\]
Thus $w$ is anti-holomorphic in $A$. Equivalently, $h=\overline w$ is holomorphic in $A$ and smooth on $\overline A$. On the outer boundary,
\[
h(e^{i\theta})=\overline{w(e^{i\theta})}=\rho e^{-i\theta}.
\]
Writing
\[
h(z)=\sum_{n\in\mathbb Z}b_nz^n,
\qquad r_1<|z|<1,
\]
we have, for $r_1<r<1$,
\[
b_n=\frac{1}{2\pi i}\int_{|\zeta|=r}h(\zeta)\zeta^{-n-1}\,d\zeta
=\frac{r^{-n}}{2\pi}\int_0^{2\pi}h(re^{i\theta})e^{-in\theta}\,d\theta.
\]
Since $h\in C^\infty(\overline A)$, we may let $r\uparrow1$ and use $h(e^{i\theta})=\rho e^{-i\theta}$. Thus
\[
b_n=\frac1{2\pi}\int_0^{2\pi}\rho e^{-i\theta}e^{-in\theta}\,d\theta
=\rho\delta_{n,-1}.
\]
Consequently
\[
h(z)=\rho z^{-1}
\qquad\text{in }A,
\]
and therefore
\[
w(z)=\frac\rho{\bar z}
\qquad\text{in }A.
\]
Thus $u=u^-$ in $A$.

It remains to pass from the collar $A$ to the whole disk. Smooth solutions of the semilinear elliptic system \eqref{eq:HM} are real analytic
in the interior of $B_1$ by standard analytic elliptic regularity \cite{Morrey1958}. Since $u$ agrees with one of the analytic maps $u^\pm$ on
the nonempty open annulus $A$, the identity theorem for real-analytic functions implies that the same equality holds throughout $B_1$.
\end{proof}

\begin{remark}
The collar classification uses only holomorphicity or anti-holomorphicity inside the annulus and the smooth trace on the outer boundary. It
does not require analytic continuation of $w$ across $\partial B_1$, and it does not assume that the stereographic coordinate is defined at
the center of the disk.
\end{remark}

\begin{remark}
The formula $w=\rho/\bar z$ has a pole at $z=0$ when written in the stereographic coordinate from the south pole. This pole is not a
singularity of the corresponding sphere-valued map: it represents the value $S=(0,0,-1)$ of $u^-$. The collar argument above avoids any
ambiguity by using the finite coordinate $w$ only near $\partial B_1$, where the trace stays a positive distance from $S$, and then extends
the resulting identity for $u$ by real analyticity.
\end{remark}

\section{Proof of the main theorem}\label{sec:proof-main}
\subsection{Proof of Theorem~\ref{thm:main}}
\begin{proof}[Proof of Theorem~\ref{thm:main}]
By Proposition~\ref{prop:regularity}, the weak solution $u$ is smooth up to the boundary. Proposition~\ref{prop:boundary-rigidity} shows that
$q$ vanishes on $\partial B_1$. Since $q$ is holomorphic in $B_1$ and continuous on $\overline{B_1}$, the maximum modulus principle gives
\[
q\equiv0
\qquad\text{in }B_1.
\]
Proposition~\ref{prop:conformal-classification} now yields $u=u^+$ or $u=u^-$. For the direct verification of these explicit formulas and
their associated potentials, see Appendix~\ref{app:explicit}. This proves the theorem.
\end{proof}

The proof of Theorem~\ref{thm:main} includes the equatorial endpoint $R=1$. In this case
\[
g_1(e^{i\theta})=(\cos\theta,\sin\theta,0),
\qquad \rho=1,
\]
and the two explicit maps become
\[
u^+(r,\theta)=\left(
\frac{2r}{1+r^2}\cos\theta,
\frac{2r}{1+r^2}\sin\theta,
\frac{1-r^2}{1+r^2}
\right),
\]
\[
u^-(r,\theta)=\left(
\frac{2r}{1+r^2}\cos\theta,
\frac{2r}{1+r^2}\sin\theta,
\frac{r^2-1}{1+r^2}
\right).
\]
They fill the northern and southern hemispheres, respectively, and satisfy the same equatorial trace. None of the rigidity estimates
degenerates at $R=1$ as  the Pohozaev identity gives
\[
\int_0^{2\pi}|u_r(1,\theta)|^2\, d\theta=2\pi,
\]
the projection-Jacobian identity gives
\[
\AlgArea(\gamma)=\pi,
\]
and the sharp chain
\[
4\pi^2\le L_h^2\le L^2\le 4\pi^2
\]
still forces
\[
|u_r(1,\theta)|\equiv1,
\qquad
u_r(1,\theta)\cdot u_\theta(1,\theta)\equiv0.
\]
Moreover the collar classification remains nondegenerate because
\[
w(e^{i\theta})=e^{i\theta},
\qquad
w_\theta(e^{i\theta})=ie^{i\theta}\ne0.
\]
Thus the two maps displayed above are the only weak harmonic maps with equatorial once-around trace.

\begin{remark}[The degenerate endpoint $R=0$]
At $R=0$ the boundary data collapse to the constant north pole,
\[
g_0(e^{i\theta})=(0,0,1)=:N,
\qquad \rho=0.
\]
The correct conclusion is no longer a two-map statement: the only smooth harmonic map with this trace is the constant map $u\equiv N$. Indeed,
the Pohozaev identity gives
\[
\int_0^{2\pi}|u_r(1,\theta)|^2\, d\theta=0,
\]
hence $u_r=0$ on $\partial B_1$, while the constant trace gives $u_\theta=0$ there. Thus $u$ has the same boundary value and normal derivative
as the constant solution. Boundary unique continuation for analytic semilinear elliptic systems then gives $u\equiv N$.

This endpoint is qualitatively different from $R=1$. For $R=1$ one still has $\rho=1$ and a nonzero boundary derivative
$w_\theta(e^{i\theta})=ie^{i\theta}$, so the collar orientation argument used in Proposition~\ref{prop:conformal-classification} remains
valid. For $R=0$, however, $w(e^{i\theta})=0$ and $w_\theta(e^{i\theta})=0$, so the step that selects a holomorphic or anti-holomorphic branch
from the sign of the Jacobian loses its nondegenerate input.

The degeneration as $R\downarrow0$ reflects this collapse. Since $\rho\downarrow0$, the upper branch satisfies $u^+\to N$ strongly, and its
conformal energy equals the area of the small northern cap,
\[
E(u^+)=2\pi(1-\sqrt{1-R^2})\to0.
\]
The lower branch converges to $N$ locally away from the origin but carries one bubble at the origin: for every $\rho>0$, $u^-(0)=S$, and
\[
E(u^-)=2\pi(1+\sqrt{1-R^2})\to4\pi.
\]
At the level of the rigidity mechanism, the projected algebraic area $\pi R^2$, the Pohozaev boundary mass $2\pi R^2$, and the length scale
$2\pi R$ all tend to zero. The sharp inequality chain remains formally true, but it collapses to $0\le0\le0$ and no longer supplies the
nonzero boundary speed needed for the collar classification. Thus $R=0$ is a genuine degeneration of the two-branch picture, whereas $R=1$ is
a nondegenerate endpoint included in Theorem~\ref{thm:main}.
\end{remark}

\medskip
\noindent\textbf{Energy and min-max picture.}
Let $c=\sqrt{1-R^2}$. The exact energy levels are
\[
 E(u^+)=2\pi(1-c),\qquad E(u^-)=2\pi(1+c),
\]
so $E(u^+)<E(u^-)$ for $0<R<1$, $E(u^+)=E(u^-)=2\pi$ for $R=1$, and
\[
 E(u^+)+E(u^-)=4\pi,
 \qquad
 E(u^-)-E(u^+)=4\pi\sqrt{1-R^2}.
\]
The proof is just the conformal area formula: $u^+$ fills the spherical cap $\{x_3\ge c\}$ and $u^-$ fills $\{x_3\le c\}$.

Thus, a finite-dimensional mountain-pass picture is misleading. A strongly convergent Palais--Smale min-max sequence would limit to a harmonic
map with trace $g_R$, hence by Theorem~\ref{thm:main} to $u^+$ or $u^-$. Therefore a third saddle cannot occur; the only possible transition
between the two relative classes is noncompactness by bubbling.
As $R\downarrow0$, $u^+\to N$ strongly, while $u^-\to N$ locally away from $0$ and carries one $4\pi$ bubble at the origin.

\subsection{Proof of Corollary~\ref{cor:normal-trace-Hsystem}}

\begin{lemma}\label{reverseimpossible}
Let $Y:B_1\to\mathbb R^3$ be a smooth conformal immersion satisfying the
$H$-system
\[
 -\Delta Y=Y_x\wedge  Y_y \quad\text{in }B_1.
\]
Let $\nu_Y$
be the unit normal induced by the orientation of the parameter disk.  Then  $\nu_Y$ cannot be
 the orientation-reversing alternative, i.e.
\[
 \nu_Y=u^-\circ\mathcal R_\phi.
\]
\end{lemma}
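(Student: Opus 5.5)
The strategy is to exploit the fact that the $H$-system makes $Y$ a constant mean curvature immersion, and then to compare the orientation behaviour of its Gauss map with that of $u^-$. Write $\lambda=|Y_x|^2=|Y_y|^2>0$ for the conformal factor, so that $Y_x\times Y_y=\lambda\,\nu_Y$. For a conformal immersion, $\Delta Y=2\mathcal H\lambda\,\nu_Y$, where $\mathcal H$ is the mean curvature with respect to $\nu_Y$. Hence the equation $-\Delta Y=Y_x\wedge Y_y=\lambda\nu_Y$ says exactly that $\mathcal H\equiv-\tfrac12$ is a nonzero constant.

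The first main step is a Weingarten-type decomposition of $\partial_z\nu_Y$. Set $Q=Y_{zz}\cdot\nu_Y$ (the Hopf function of the second fundamental form). Using $Y_z\cdot Y_z=0$ and $Y_z\cdot Y_{\bar z}=\lambda/2$, I expect to derive
\[
\partial_z\nu_Y=-\mathcal H\,Y_z-\frac{2Q}{\lambda}\,Y_{\bar z}.
\]
Two consequences follow. First, $\partial_z\nu_Y\cdot\partial_z\nu_Y=\frac{4\mathcal HQ}{\lambda}\,Y_z\cdot Y_{\bar z}=2\mathcal HQ$, so the Hopf scalar of $\nu_Y$ is $2\mathcal H Q$. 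Second, computing the Jacobian gives
\[
\nu_Y\cdot\bigl((\nu_Y)_x\times(\nu_Y)_y\bigr)=K\lambda
\qquad\text{with}\qquad K=\mathcal H^2-\frac{4|Q|^2}{\lambda^2}.
\]

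Now suppose, for contradiction, that $\nu_Y=u^-\circ\Rot_\phi$. The map $u^-$ is anti-conformal; in the stereographic coordinate it is $w=\rho/\bar z$. Hence its Hopf scalar vanishes identically, and the same holds after composing with the rotation $\Rot_\phi$. Since $\mathcal H\ne0$, the first consequence forces $Q\equiv0$, so $Y$ is totally umbilic. Then $K=\mathcal H^2=\tfrac14>0$, and the Jacobian identity gives
\[
\nu_Y\cdot\bigl((\nu_Y)_x\times(\nu_Y)_y\bigr)=\tfrac14\lambda>0
\]
everywhere. Equivalently, $\partial_z\nu_Y=-\mathcal HY_z$ gives $\nu_Y=-\mathcal H Y+\text{const}$, which is orientation preserving as a map to $\Sph$.

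On the other hand, for $u^-$ one has $u^-\cdot(u^-_x\times u^-_y)=-\tfrac12|\nabla u^-|^2$ by anti-conformality. This quantity is strictly negative on $B_1$, since $Dw\neq0$ for $w=\rho/\bar z$ away from $z=0$, and $u^-$ is a diffeomorphism onto its cap. The rotation $\Rot_\phi$ preserves orientation, so $u^-\circ\Rot_\phi$ also has negative Jacobian. This contradicts the previous positivity, and the lemma follows.

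The main obstacle is getting the sign conventions right in the identity for $\partial_z\nu_Y$ and in the Jacobian formula $\nu\cdot(\nu_x\times\nu_y)=K\lambda$, where $\nu$ is oriented by the ordered coordinates $(x,y)$. I would verify both directly by differentiating $\nu_Y\cdot Y_z=0$ and $\nu_Y\cdot\nu_Y=1$. Only the vanishing of the Hopf scalar and the sign of the Jacobian are actually used, so the exact value of $\mathcal H$ matters only in that it is nonzero.
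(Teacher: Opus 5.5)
Your proof is correct and is essentially the paper's argument in complex notation. Your identity $\partial_z\nu_Y\cdot\partial_z\nu_Y=2\mathcal H Q$, which says $\nu_Y$ is conformal exactly when $\mathcal H Q=0$, is the paper's condition $\kappa_1^2=\kappa_2^2$, and your Jacobian formula $\nu_Y\cdot((\nu_Y)_x\times(\nu_Y)_y)=K\lambda$ is its $e^{2\lambda}\det A=e^{2\lambda}\kappa_1\kappa_2$. The only difference is the direction of the final step: you use $\mathcal H\neq0$ to get umbilicity and $K=\tfrac14>0$, while the paper uses $K<0$ to get $\kappa_2=-\kappa_1$ and hence $H=0$; your by-product $\nu_Y=\tfrac12 Y+\mathrm{const}$ is the relation $dY=2\,d\nu$ the paper uses later in the corollary.
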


\begin{proof}
We first recall the geometric meaning of the shape operator.  Since $Y$ is a
conformal immersion, there exists a function $\lambda$ such that
\[
 Y_x\cdot Y_y=0,
 \qquad
 |Y_x|^2=|Y_y|^2=e^{2\lambda}.
\]
The orientation of the immersed disk is fixed by
\[
 \nu_Y=\frac{Y_x\times Y_y}{e^{2\lambda}}.
\]
The shape operator $A$ of the immersion, with respect to this normal, is the
bundle endomorphism of the tangent bundle defined by the Weingarten equation
\[
 d\nu_Y(V)=-dY(AV)
 \qquad\text{for every tangent vector }V.
\]
Equivalently, $A$ is the differential of the Gauss map written back on the
tangent plane of the surface.  Its eigenvalues $\kappa_1,\kappa_2$ are the
principal curvatures, and with this convention the scalar mean curvature is
\[
 H=\frac{\kappa_1+\kappa_2}{2}.
\]
This is why the shape operator is useful here: it converts the conformality and
orientation of the Gauss map $\nu_Y$ into algebraic information about the
principal curvatures of the surface.
With the above convention one has the standard identity, in conformal
coordinates,
\[
 \Delta Y=2e^{2\lambda}H\nu_Y.
\]
On the other hand,
\[
 Y_x\times Y_y=e^{2\lambda}\nu_Y.
\]
Therefore, the $H$-system gives
\[
 -2e^{2\lambda}H\nu_Y=e^{2\lambda}\nu_Y,
\]
and hence
\[
 H=-\frac12.
\]
In particular, the scalar mean curvature is nonzero everywhere.

We now use the orientation-reversing assumption.  The explicit Br\'ezis--Coron
alternative $u^-$ is anti-conformal as a map into $S^2$; equivalently, away
from its possible isolated branch points,
\[
 (u^-)_x\cdot (u^-)_y=0,
 \qquad
 |(u^-)_x|=|(u^-)_y|,
 \qquad
 u^-\cdot\bigl((u^-)_x\times (u^-)_y\bigr)<0.
\]
Precomposition by the rotation $\mathcal R_\phi$ preserves this sign.  Thus,
if $\nu_Y=u^-\circ\mathcal R_\phi$, then the Gauss map $\nu_Y$ is
anti-conformal and orientation-reversing, namely
\[
 |(\nu_Y)_x|=|(\nu_Y)_y|,
 \qquad
 (\nu_Y)_x\cdot(\nu_Y)_y=0,
 \qquad
 J_{\nu_Y}:=\nu_Y\cdot\bigl((\nu_Y)_x\times(\nu_Y)_y\bigr)<0.
\]

We relate this to the shape operator.  From the Weingarten equation,
$d\nu_Y=-dY\circ A$.  Since $dY$ is conformal with conformal factor
$e^\lambda$, the pull-back metric of the Gauss map satisfies
\[
 \nu_Y^*g_{S^2}(V,W)=e^{2\lambda}\,\langle AV,AW\rangle
\]
for tangent vectors $V,W$ on $B_1$.  Therefore the conformality of $\nu_Y$
implies that $A^2$ is a scalar multiple of the identity.  Because $A$ is
self-adjoint, this means that the principal curvatures satisfy
\[
 \kappa_1^2=\kappa_2^2.
\]
Moreover,
\[
 J_{\nu_Y}
 =\nu_Y\cdot\bigl((\nu_Y)_x\times(\nu_Y)_y\bigr)
 =e^{2\lambda}\det A
 =e^{2\lambda}\kappa_1\kappa_2.
\]
The orientation-reversing condition $J_{\nu_Y}<0$ therefore implies
\[
 \kappa_1\kappa_2<0.
\]
Combining this with $\kappa_1^2=\kappa_2^2$ gives
\[
 \kappa_2=-\kappa_1.
\]
Consequently
\[
 H=\frac{\kappa_1+\kappa_2}{2}=0.
\]
This contradicts the value $H=-\frac12$ forced by the $H$-system orientation.
Hence the orientation-reversing alternative cannot occur.
\end{proof}
\begin{proof}[Proof of Corollary~\ref{cor:normal-trace-Hsystem}]
We give the proof in three steps.

\smallskip
\noindent\textbf{Step 1: the induced normal is a harmonic map.}

Write the conformal factor as
\[
|Y_x|^2=|Y_y|^2=e^{2\lambda},
\qquad
Y_x\cdot Y_y=0.
\]
Since $Y$ is oriented by $(x,y)$,
\[
Y_x\wedge Y_y=e^{2\lambda}\nu_Y.
\]
For a conformal immersion into $\RR^3$, the mean-curvature scalar $H$ with respect to $\nu_Y$ satisfies
\[
\Delta Y=2e^{2\lambda}H\nu_Y.
\]
As in the proof of Lemma \ref{reverseimpossible}, the $H$-system equation gives $H=-1/2$.
In particular, the mean curvature is constant. The standard Gauss-map equation for a conformal immersion is
\[
\Delta_g\nu_Y+|A|_g^2\nu_Y=\nabla_g H,
\]
where $g=e^{2\lambda}(dx^2+dy^2)$ is the induced metric and $A$ is the second fundamental form. Since $H$ is constant, the right-hand side
vanishes. Multiplying by $e^{2\lambda}$ gives
\[
\Delta \nu_Y+|\nabla \nu_Y|^2\nu_Y=0.
\]
Thus $\nu_Y$ is a weak harmonic map from $\B$ into $\Sph$.

\smallskip
\noindent\textbf{Step 2: apply the Br\'ezis-Coron uniqueness theorem to the normal.}

(1) First, we suppose $\sigma=+1$. Define
\[
\widetilde\nu(r,\theta)=\nu_Y(r,\theta-\phi).
\]
Then $\widetilde\nu$ is a harmonic map into $\Sph$ and
\[
\widetilde\nu(1,\theta)=g_R^+(\theta).
\]
By Theorem~\ref{thm:main}, we have
\[
\widetilde\nu=u^+\quad\text{or}\quad \widetilde\nu=u^-.
\]
A direct computation gives
\[
u^+\cdot(u^+_x\times u^+_y)
=
\frac{4\rho^2}{(1+\rho^2r^2)^2}>0,
\qquad
u^-\cdot(u^-_x\times u^-_y)
=
-\frac{4\rho^2}{(\rho^2+r^2)^2}<0.
\]
Thus $u^+$ is conformal and orientation-preserving, whereas $u^-$ is conformal and orientation-reversing.

Lemma \ref{reverseimpossible} shows that the orientation-reversing alternative is incompatible with the $H$-system orientation. This excludes
$u^-$ in the case $\sigma=+1$, and gives
\[
\nu_Y=u^+\circ\Rot_\phi.
\]
Similar to Lemma \ref{reverseimpossible}, we have $H=-1/2$, $A=-I/2$. Then from the Weingarten formula $d\nu =-dY\circ A$, we have
\[ d\nu =\frac12 dY, \quad d Y=2 d \nu. \]
Since $B_1$ is connected, we have
\[
Y=a+2u^+\circ\Rot_\phi.
\]
Since $Z^+=2u^+-2\sqrt{1-R^2}\,\eiii$, the constant $2\sqrt{1-R^2}\,\eiii$ can be absorbed into $a$, and therefore
\[
Y=a+Z^+\circ\Rot_\phi.
\]

\smallskip
(2) Now suppose $\sigma=-1$. Set
\[
\widetilde\nu(r,\theta)=-\nu_Y(r,\theta-\phi+\pi).
\]
Then $\widetilde\nu$ is again a harmonic map into $\Sph$, and
\[
\widetilde\nu(1,\theta)
=-g_R^-(\theta+\pi)
=g_R^+(\theta).
\]
By Theorem~\ref{thm:main}, we have
\[
\widetilde\nu=u^+\quad\text{or}\quad \widetilde\nu=u^-.
\]
The first alternative would make $\nu_Y$ orientation-reversing, because the antipodal map on $\Sph$ reverses orientation. This is impossible
by the shape-operator argument as in Lemma \ref{reverseimpossible}. Therefore
\[
\widetilde\nu=u^-.
\]
Equivalently,
\[
\nu_Y=-u^-\circ\Rot_{\phi+\pi}.
\]
Hence, using again $Y=2\nu_Y+a$,
\[
Y=a-2u^-\circ\Rot_{\phi+\pi}.
\]
By the definition of $Z^-$,
\[
Z^-= -2u^-\circ\Rot_\pi+2\sqrt{1-R^2}\,\eiii,
\]
and the vertical translation is absorbed into $a$. Thus
\[
Y=a+Z^-\circ\Rot_\phi.
\]

\end{proof}

The normal-trace condition in Corollary~\ref{cor:normal-trace-Hsystem} can be guaranteed by several familiar types of boundary data. We record
them in the normalized case $\phi=0$; the rotated case is obtained by replacing $\theta$ with $\theta+\phi$.

\begin{itemize}
\item\textbf{Circular Dirichlet boundary plus constant contact angle with the horizontal plane.}
Assume
\begin{align*}
    \begin{cases}
        Y(1,\theta)=a+\Gamma_R(\theta),\\
        \nu_Y(1,\theta)\cdot e_3=\sigma\sqrt{1-R^2},\\
\nu_Y(1,\theta)\cdot (\cos\theta,\sin\theta,0)>0.
    \end{cases}
\end{align*}
Since $\nu_Y$ is orthogonal to the boundary tangent
\[
\partial_\theta \Gamma_R(\theta)=(-2R\sin\theta,2R\cos\theta,0),
\]
its horizontal component must be parallel to $(\cos\theta,\sin\theta,0)$. The unit-length condition and the prescribed vertical component
then force
\[
\nu_Y(1,\theta)=g_R^\sigma(\theta).
\]
\item \textbf{Tangency to the comparison sphere.}
Assume
\[
Y(1,\theta)=a+\Gamma_R(\theta)
\]
and assume that, along the boundary, the tangent plane of $Y$ agrees with the tangent plane of the sphere
\[
\left|x-\bigl(a-2\sigma\sqrt{1-R^2}\,e_3\bigr)\right|=2,
\]
with the same oriented unit normal. At the boundary point
\[
a+\Gamma_R(\theta),
\]
the outward unit normal of this sphere is
\[
\frac{a+\Gamma_R(\theta)-\bigl(a-2\sigma\sqrt{1-R^2}\,e_3\bigr)}{2}
=
\bigl(R\cos\theta,R\sin\theta,\sigma\sqrt{1-R^2}\bigr).
\]
Therefore
\[
\nu_Y(1,\theta)=g_R^\sigma(\theta).
\]

\item \textbf{Circular boundary on the vertical cylinder with prescribed angle.}
Let
\[
e_r(\theta)=(\cos\theta,\sin\theta,0).
\]
Assume the boundary is the horizontal circle
\begin{align*}
    \begin{cases}
        Y(1,\theta)=a+\Gamma_R(\theta)\\
        \nu_Y(1,\theta)\cdot e_r(\theta)=R,
\\
\operatorname{sign}\bigl(\nu_Y(1,\theta)\cdot e_3\bigr)=\sigma.
    \end{cases}
\end{align*}
Because $\nu_Y$ is unit length and orthogonal to the circular tangent direction, it has the form
\[
\nu_Y(1,\theta)=R e_r(\theta)+c(\theta)e_3.
\]
The unit-length condition gives
\[
|c(\theta)|=\sqrt{1-R^2},
\]
and the prescribed sign gives $c(\theta)=\sigma\sqrt{1-R^2}$. Hence
\[
\nu_Y(1,\theta)=g_R^\sigma(\theta).
\]
\end{itemize}

\begin{remark}
All conditions above are stronger than the normal-trace condition, but they are often easier to state from the outside: they use a boundary
curve, a contact angle, a conormal direction, or tangency to a comparison sphere. By contrast, a pure Dirichlet condition such as
$Y|_{\partial\B}=\Gamma_R$ or $Y|_{\partial\B}=0$ does not determine $\nu_Y|_{\partial\B}$ and therefore does not imply
Corollary~\ref{cor:normal-trace-Hsystem} by itself.
\end{remark}

\section{Further remarks}\label{sec:further-remarks}

Up to rigid motions of $\Sph$ and rigid rotations of $\partial B_1$, Theorem~\ref{thm:main} treats the model case of a constant-speed,
once-around parametrization of a round circle. Several nearby uniqueness problems are therefore natural. Even for the same circle but a
non-constant-speed parametrization $g$, the projection-Jacobian identity still fixes the algebraic area of the projected $X$-curve, whereas
the boundary term in the Pohozaev identity becomes $\int_0^{2\pi}|g'(\theta)|^2\,d\theta$; the exact saturation mechanism summarized in
Figure~\ref{fig:proof} is then lost.

A second family is given by the higher-winding traces
\[
 g_{R,m}(e^{i\theta})=
 \bigl(R\cos(m\theta),R\sin(m\theta),\sqrt{1-R^2}\bigr),
 \qquad m\ge 2.
\]
Here the branched conformal maps $w(z)=\rho z^m$ and $w(z)=\rho/\bar z^m$ already provide two natural harmonic maps with that trace, so one
expects a subtler classification problem. For these higher-winding data, the boundary carries more relative degree. The work of Soyeur, Qing,
and Kuwert shows that the minimizing theory then has several attainable relative classes, rather than only the two endpoint classes visible in
the once-around case \cite{Soyeur1989,Qing1992Remark,Qing1992Multiple,Kuwert1994}. This is the topological mechanism behind the expected
multiplicity: different portions of the boundary winding can be filled by meromorphic and anti-meromorphic pieces, and weak minimizing
sequences may also lose degree by bubbling. The present theorem says that this mechanism collapses completely when \(m=1\): the two endpoint
fillings survive, but no additional saddle or mountain-pass critical point is compatible with the Hopf-differential rigidity. More generally,
for non-round Jordan curves not contained in a geodesically convex ball, it is natural to ask whether some replacement for the exact area
identity and the sharp length bound can still force the boundary vanishing of the Hopf differential. By contrast, when the image of the trace
lies in a geodesically convex ball, uniqueness is classical \cite{HKW1977,JaegerKaul1979}. We hope that the boundary-rigidity viewpoint
developed here may also be useful in such nonconvex boundary-value problems.

The preceding questions are also close in spirit to Hopf-type rigidity and integrability phenomena in two-dimensional geometry and dynamics.
Bialy's work on convex billiards and Hopf rigidity shows how a sharp integral inequality can force a geometric object to be round
\cite{Bialy1993,Bialy2013,Bialy2015}. Taimanov's work on topological obstructions to integrability and on closed extremals gives complementary
examples of how global topological or variational information constrains geodesic and Lagrangian dynamics on surfaces
\cite{Taimanov1988,Taimanov1992}. It is also useful to compare the role of the auxiliary potential $X$ with Taimanov's Gauss-map and spinor
approach to surface theory: in the global Weierstrass representation of closed surfaces and in the two-dimensional Dirac-operator formalism,
Gauss-map data and first-order Dirac systems organize the reconstruction and deformation of surfaces
\cite{Taimanov1998Weierstrass,Taimanov2006Dirac}. The present argument is different in substance: the fixed quantity is the algebraic area of
the projected boundary curve associated with $X$. Nevertheless, the equality-forcing mechanism in Proposition~\ref{prop:boundary-rigidity}
belongs to this broader circle of rigidity ideas.

\appendix

\section{The explicit solutions and their potentials}\label{app:explicit}

For completeness we record a direct check for the two explicit maps.

\begin{proposition}\label{prop:explicit-check}
The maps $u^\pm$ defined in the Introduction solve \eqref{eq:HM}. Moreover, one may choose the associated potentials to be
\[
X^+=u^+,
\]
and
\begin{equation}\label{eq:Xminus}
X^-(r,\theta)=
\left(
-\frac{2\rho r}{\rho^2+r^2}\cos\theta,
-\frac{2\rho r}{\rho^2+r^2}\sin\theta,
\frac{\rho^2-r^2}{\rho^2+r^2}
\right).
\end{equation}
In particular,
\[
|u_r^\pm(1,\theta)|=R,
\qquad
u_r^\pm(1,\theta)\cdot u_\theta^\pm(1,\theta)=0,
\]
and, for $0<R\le1$, the boundary curves of $X^\pm$ are horizontal circles of radius $R$ at heights $\pm\sqrt{1-R^2}$, with the two heights
coinciding in the equatorial case $R=1$.
\end{proposition}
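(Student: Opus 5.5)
The plan is to avoid brute-force differentiation of the harmonic map equation and instead use the conformal structure of $u^\pm$. The key observation is that, for a weakly conformal map into $\Sph$, the potential $X$ is the map itself up to sign. Harmonicity then follows almost for free from the closedness computation in Section~\ref{sec:auxiliary}.

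\textbf{Step 1 (conformality and orientation).} In the south-pole stereographic coordinate, $u^+=\pi_S^{-1}(\rho z)$ and $u^-=\pi_S^{-1}(\rho/\bar z)$. Since $\pi_S^{-1}$ is conformal and orientation-preserving, $u^+$ is conformal and orientation-preserving. Likewise $u^-$ is conformal and orientation-reversing, with $u^-(0)=S$ handled by the smooth formula in the Introduction. In both cases $u_x\cdot u_y=0$ and $|u_x|=|u_y|$, and neither derivative vanishes in $B_1$. Because $u_x,u_y\in T_u\Sph$ and rotation by $\pi/2$ in the tangent plane is $v\mapsto u\times v$, this gives
\[
u^\pm_y=\pm\,u^\pm\times u^\pm_x .
\]
I will fix the sign by one evaluation, using the Jacobian signs $u^\pm\cdot(u^\pm_x\times u^\pm_y)\gtrless0$ recorded in the proof of Corollary~\ref{cor:normal-trace-Hsystem}.

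\textbf{Step 2 (potentials).} Write $\epsilon=\pm1$, so that $u_y=\epsilon\,u\times u_x$. Then $u\times u_y=\epsilon\,u\times(u\times u_x)=-\epsilon u_x$, since $u\cdot u_x=0$. Hence
\[
-u\times u_y=\epsilon u_x,\qquad u\times u_x=\epsilon u_y .
\]
So $X=\epsilon u$ satisfies \eqref{eq:X-def}. This gives $X^+=u^+$ and $X^-=-u^-$ up to a constant, and $-u^-$ is exactly \eqref{eq:Xminus}.

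\textbf{Step 3 (harmonicity).} With $A=X_x$ and $B=X_y$ for the smooth map $X=\epsilon u$, the computation in Section~\ref{sec:auxiliary} gives
\[
0=B_x-A_y=u\times\Delta u .
\]
Hence $\Delta u$ is parallel to $u$, i.e. $\Delta u=\mu u$ for some scalar function $\mu$. Differentiating $|u|^2=1$ twice gives $u\cdot\Delta u=-|\nabla u|^2$, so $\mu=-|\nabla u|^2$, which is \eqref{eq:HM}. The boundary trace $g_R$ is read off at $r=1$ using $2\rho/(1+\rho^2)=R$ and $(1-\rho^2)/(1+\rho^2)=\sqrt{1-R^2}$, which follow from the definition of $\rho$.

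\textbf{Step 4 (boundary quantities).} Conformality in polar form gives $|u_r|=r^{-1}|u_\theta|$ and $u_r\cdot u_\theta=0$. At $r=1$ we have $|u_\theta|=R$, so $|u_r^\pm(1,\theta)|=R$ and $u_r^\pm\cdot u_\theta^\pm=0$ there. The boundary curve of $X^+$ is $g_R$, a horizontal circle of radius $R$ at height $\sqrt{1-R^2}$. The boundary curve of $X^-$ is $-g_R$, a horizontal circle of radius $R$ at height $-\sqrt{1-R^2}$. The two heights coincide when $R=1$.

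The only delicate point is sign bookkeeping in Step 1: the orientation sign $\epsilon$, the convention $X_x=-u\times u_y$, and the orientation of $\pi_S^{-1}$ must all be consistent. I would settle these by evaluating $u^\pm_x$ and $u^\pm_y$ explicitly at a single point, for instance $z=1$.
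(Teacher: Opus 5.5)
Your proposal is correct, and it takes a genuinely different route from the paper.

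The paper's proof is purely computational. It checks the trace via $R=2\rho/(1+\rho^2)$ and $\sqrt{1-R^2}=(1-\rho^2)/(1+\rho^2)$, and notes that $u^-$ extends smoothly across $0$. It then asserts that \eqref{eq:HM}, the potential identities \eqref{eq:X-def} for $X^+=u^+$ and \eqref{eq:Xminus}, and the boundary identities all follow by direct differentiation.

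You instead derive everything from (anti)conformality:
\begin{enumerate}
\item The pointwise relation $u_y=\epsilon\,u\times u_x$, with $\epsilon=\operatorname{sign}\bigl(u\cdot(u_x\times u_y)\bigr)$, makes $X=\epsilon u$ a potential.
\item The identity $B_x-A_y=u\times\Delta u$ from Section~\ref{sec:auxiliary} holds for every smooth $u$, so nothing is circular. It gives $\Delta u\parallel u$ and hence \eqref{eq:HM}.
\item The boundary identities are just $q=0$ read through \eqref{eq:Hopf-polar} at $r=1$.
\end{enumerate}

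Your sign bookkeeping checks out. At $z=1$ one finds $u^\pm=(R,0,\sqrt{1-R^2})$, $u^\pm_y=(0,R,0)$ and $u^\pm_x=\pm(R\sqrt{1-R^2},0,-R^2)$, so indeed $u^\pm_y=\pm\,u^\pm\times u^\pm_x$. Also $-u^-$ is exactly \eqref{eq:Xminus}. The Jacobians never vanish on $B_1$, including at $z=0$ for $u^-$, where $|u^-_x|=2/\rho$. Hence a single evaluation fixes $\epsilon$ on the whole connected disk, as you intend.

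What your approach buys:
\begin{itemize}
\item It explains why the potentials are $\pm u^\pm$; this is the same mechanism behind $Y=2\nu_Y+a$ in the corollary.
\item It reduces all the computation to one sign check.
\item It transfers to other (anti)conformal fillings, such as the branched maps in Section~\ref{sec:further-remarks}. At isolated branch points the relation $u_y=\epsilon\,u\times u_x$ holds trivially, and the sign is constant on their connected complement.
\end{itemize}
The paper's direct differentiation needs no orientation discussion, but it gives none of this structural insight.
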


\begin{proof}
We first justify the stereographic formulas. Using the south-pole stereographic projection, substituting $w=\rho z=\rho r e^{i\theta}$ into
$\pi_S^{-1}$ gives the displayed formula for $u^+$. Substituting $w=\rho/\bar z=(\rho/r)e^{i\theta}$ gives the displayed formula for $u^-$ for
$r>0$. At $r=1$, the identities
\[
 R=\frac{2\rho}{1+\rho^2},
 \qquad
 \sqrt{1-R^2}=\frac{1-\rho^2}{1+\rho^2}
\]
give $u^\pm(1,\theta)=g_R(e^{i\theta})$, so both stereographic formulas have precisely the required trace. Although $\rho/\bar z$ has a pole
at the origin, this pole represents the south pole under the inverse stereographic coordinate; equivalently,
\[
 u^-(x,y)=\left(\frac{2\rho x}{\rho^2+x^2+y^2},\frac{2\rho y}{\rho^2+x^2+y^2},\frac{x^2+y^2-\rho^2}{\rho^2+x^2+y^2}\right),
\]
extends smoothly across $0$.

The harmonic-map equations and the potential identities now follow by direct differentiation. For $u^+$ one checks that \eqref{eq:X-def} holds
with $X^+=u^+$. For $u^-$ a direct computation shows that \eqref{eq:X-def} holds with $X^-$ given by \eqref{eq:Xminus}. Evaluating the
derivatives at $r=1$ and using the identities relating $R$ and $\rho$ yields the stated boundary identities.
\end{proof}

\section{Two vector identities}\label{app:vectors}

\begin{lemma}\label{lem:vector-identities}
For $a,b,c\in\R^3$ one has
\[
(a\times b)\times(a\times c)=\det(a,b,c)\,a.
\]
If $v,w\in T_u\Sph$ for some $u\in\Sph$, then
\[
v\times w=\bigl(u\cdot(v\times w)\bigr)u.
\]
\end{lemma}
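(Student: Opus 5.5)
Both identities are pure linear algebra in $\R^3$, so I would verify them directly from the standard triple-product formulas, without coordinates.

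For the first identity I will use the vector triple product formula $p\times(q\times r)=(p\cdot r)\,q-(p\cdot q)\,r$. I apply it with $p=a\times b$, $q=a$, $r=c$:
\[
(a\times b)\times(a\times c)=\bigl((a\times b)\cdot c\bigr)\,a-\bigl((a\times b)\cdot a\bigr)\,c .
\]
The second term vanishes because $a\times b$ is orthogonal to $a$. The first coefficient is the scalar triple product, $(a\times b)\cdot c=\det(a,b,c)$. This gives $(a\times b)\times(a\times c)=\det(a,b,c)\,a$. The only point needing care is the sign and ordering convention in the triple product formula. As a sanity check, I will test the case $a=e_1$, $b=e_2$, $c=e_3$: the left side is $e_3\times(-e_2)=e_1$, and $\det(e_1,e_2,e_3)=1$, so the signs agree.

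For the second identity, take $u\in\Sph$ and $v,w\in T_u\Sph$, so that $v\cdot u=w\cdot u=0$. Then $v\times w$ is orthogonal to both $v$ and $w$.
\begin{enumerate}
\item[(i)] If $v$ and $w$ are linearly independent, they span the plane $u^\perp$. Hence $v\times w$ is orthogonal to $u^\perp$, so it lies in $\R u$.
\item[(ii)] If $v$ and $w$ are linearly dependent, then $v\times w=0$, which also lies in $\R u$.
\end{enumerate}
In either case $v\times w=\mu u$ for some scalar $\mu$. Dotting with $u$ and using $|u|=1$ gives $\mu=u\cdot(v\times w)$, which is the claimed formula.

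Nothing here is a real obstacle. The one step needing attention is the sign bookkeeping in the first identity. Note that the application in Lemma~\ref{lem:projection-jacobian} has an extra minus sign and the ordering $(u_y,u_x)$, and $-\det(u,u_y,u_x)=\det(u,u_x,u_y)=u\cdot(u_x\times u_y)$ is consistent with the stated formula.
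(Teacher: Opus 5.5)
Your proposal is correct and follows the paper's proof essentially verbatim: you use the same vector triple-product expansion with $p=a\times b$, $q=a$, $r=c$ for the first identity, and the same orthogonality-to-$u$ argument for the second. Your separate handling of the linearly dependent case (where $v\times w=0$) adds a bit of care that the paper leaves implicit, since the paper tacitly assumes $v,w$ span $T_u\Sph$.
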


\begin{proof}
For the first identity, apply the triple-product formula
\[
x\times(y\times z)=y(x\cdot z)-z(x\cdot y)
\]
with $x=a\times b$, $y=a$, and $z=c$. This gives
\[
(a\times b)\times(a\times c)
=a\bigl((a\times b)\cdot c\bigr)-c\bigl((a\times b)\cdot a\bigr)
=\det(a,b,c)\,a.
\]
For the second identity, $v$ and $w$ are tangent to the sphere at $u$, hence orthogonal to $u$. Their cross product is therefore orthogonal to
the tangent plane and must be parallel to the unit normal $u$.
\end{proof}

\makeatletter
\begingroup
\def\@tocwrite#1#2{}%

\noindent \textbf{Declarations of interest}: None.

\bigskip
\noindent \textbf{Data availability statement}: There are no new data associated with this article.

\bigskip
\noindent \textbf{AI assistance statement}: The authors used OpenAI models to assist with initial conceptualization, symbolic checking and
manuscript editing; all mathematical validation, final proof decisions, and final wording remain the sole responsibility of the human authors.

\end{document}